\renewcommand*{\backref}[1]{}
\renewcommand*{\backrefalt}[4]{%
	\ifcase #1 (Not cited.)%
	\or        (Cited on page~#2.)%
	\else      (Cited on pages~#2.)%
	\fi}
\newcommand{\F}{\mathbb F}
\DeclareMathOperator{\AGL}{AGL}
\DeclareMathOperator{\GL}{GL}
\DeclareMathOperator{\PG}{PG}
\theoremstyle{plain}
\newtheorem{theorem}{Theorem}[section]
\newtheorem{lemma}[theorem]{Lemma}
\newtheorem{proposition}[theorem]{Proposition}
\newtheorem{corollary}[theorem]{Corollary}
\theoremstyle{definition}
\newtheorem{example}[theorem]{Example}
\newtheorem{openproblem}[theorem]{Open Problem}
\newtheorem{definition}[theorem]{Definition}
\newtheorem{remark}[theorem]{Remark}
\newtheorem{construction}[theorem]{Construction}
\numberwithin{theorem}{section}
\numberwithin{equation}{section}
\numberwithin{table}{section}
\numberwithin{figure}{section}
\DeclareMathOperator{\image}{Im}
\begin{document}
\title{On the Walsh spectra of quadratic APN functions}

\author{Sophie Hannah Bénéteau, Nicolas Goluboff, Lukas K\"olsch, and Divyesh Vaghasiya\thanks{
S.B. is with University of Florida (email: \url{sbenetea@icloud.com}), N.G. is with University of Massachusetts Amherst (email: \url{nico.goluboff@gmail.com}), L.K and D.V. are with University of South Florida  (email: \url{lukas.koelsch.math@gmail.com}, \url{divyeshvaghasiya00@gmail.com})}}
\maketitle
\begin{abstract} 
APN functions provide optimal resistance to differential attacks when used as building blocks in  block ciphers and are thus of great theoretical interest. One of the most important properties of APN functions is their linearity, which is directly related to the Walsh spectrum of the function. In this paper, we establish two novel connections that allow us to derive strong conditions on the Walsh spectra of quadratic APN functions. We prove that the Walsh transform of a quadratic APN function $F$ operating on $n=2k$ bits is uniquely associated with a vector space partition of $\F_2^n$ and a specific blocking set in the corresponding projective space $\PG(n-1,2)$. These connections allow us to prove a variety of results on the Walsh spectrum of $F$. We prove for instance that $F$ can have at most one component function of amplitude larger than $2^{3n/4}$.  We also find the first nontrivial upper bound on the number of bent component functions of a quadratic APN function, and provide conditions for a function to be CCZ-equivalent to a permutation based on its number of bent components.
\end{abstract}

 \begin{IEEEkeywords}
 APN functions,  bent functions, linearity, quadratic functions, Walsh spectrum. 
 \end{IEEEkeywords}

 % \noindent\textbf{Mathematics Subject Classification:} 11T71, 06E30, 94A60.

\section{Introduction and Notations}

Vectorial Boolean functions play an important role in the design of symmetric cryptosystems, serving as the building blocks of block ciphers. To resist differential attacks, a vectorial Boolean function should have low differential uniformity~\cite{biham1991differential,nyberg1992provable}.
\begin{definition}[Differential uniformity]
	A function $F \colon \F_{2}^n \rightarrow \F_{2}^n$ has differential uniformity $d$, if 
	\begin{equation*}
		d=\max_{a\in \F_{2}^n\setminus\{0\}, b\in \F_{2}^n} |\{x \colon F(x)+F(x+a)=b\}|.
	\end{equation*}
	A function with differential uniformity $2$ is called almost perfect nonlinear (APN).
\end{definition}
As the differential uniformity is always even, APN functions $F \colon \F_{2}^n \rightarrow \F_{2}^n$ provide optimal resistance against differential attacks. One of the most important properties of vectorial Boolean functions for use in substitution permutation networks (SPNs) is their linearity, which can be defined via the Walsh transform of the function. For applications, the case where $n$ is even is of particular importance, and we will mostly deal with this case in this paper.

We recall some notation and basic definitions; we refer the reader to the standard work~\cite{Carlet2021_Book}. We use the notation $\langle \cdot , \cdot \rangle_k$ for the usual dot product on $\F_2^k$, and omit the subscript $k$ when clear from the context. If $F \colon \F_2^n \rightarrow \F_2^m$, then we call the $2^m$ Boolean functions $F_b \colon \F_2^n \rightarrow \F_2$ defined via $F_b(x)=\langle b,F(x)\rangle_m$ for $b \in \F_2^m$ the \emph{component functions} of $F$, and we call $F_0$ the trivial component function. 
\begin{definition}[Walsh transform and linearity]
	Let $F \colon \F_{2}^n \rightarrow \F_{2}^m$ be a function. We define its \emph{Walsh transform}
	\begin{equation*}
		W_F(b,a)=\sum_{x \in \F_2^n}(-1)^{F_b(x)+\langle x,a \rangle_n}.
	\end{equation*}
    We call the multiset that collects the values $W_F(b,a)$ for all $a\in \F_2^n$, $b \in \F_2^m$ the \emph{Walsh spectrum} and the multiset that collects the values $|W_F(b,a)|$ for all $a\in \F_2^n$, $b \in \F_2^m$ the \emph{extended Walsh spectrum} of $F$.
	The \emph{linearity} of $F$ is then defined as $$L(F)=\max_{a \in \F_2^n,b\in \F_2^m\setminus\{0\}}|W_F(b,a)|.$$
\end{definition}

A function $F \colon \F_2^n \rightarrow \F_2^m$ is called \emph{quadratic} if all component functions of $F$ are a sum of a quadratic form and an affine function over $\F_2$. It follows from the theory of quadratic forms that for a quadratic function $F \colon \F_2^n \rightarrow \F_2^m$ and a fixed $b$, we have $|W_F(b,a)| \in \{0,2^{\frac{n+k}{2}}\}$ for some $0 \leq k \leq n$. Parseval's equation states that $\sum_{a \in \F_2^n}|W_F(b,a)|^2=2^{2n}$, which then shows (fixing $b$ and ranging over $a \in \F_2^n$) that $|W_F(b,a)|$ takes on the value $2^{\frac{n+k}{2}}$ exactly $2^{n-k}$ times and the value $0$ exactly $2^n-2^{n-k}$ times. We call $2^{\frac{n+k}{2}}$ the \emph{amplitude} of the component function $F_b$. Components with the lowest possible amplitude $2^{n/2}$ are of special importance (they are \emph{bent} functions), and we call these components consequently the \emph{bent components} of $F$. The extended Walsh spectrum, as well as the linearity of a quadratic function $F$, are thus uniquely determined by the amplitudes of the component functions of $F$. 

We thus focus our attention in this article on the \emph{amplitude distribution} of quadratic APN functions. It is easy to see that the trivial component $F_0$ always has amplitude $2^n$, and we will exclude it from our considerations.

\begin{definition}[Amplitude distribution] \label{def:amdis}
    Let $F \colon \F_2^n \rightarrow \F_2^m$ be a quadratic function. We call the multiset of all amplitudes of non-trivial components of $F$ the \emph{amplitude distribution} of $F$. If $F$ has $k_i$ non-trivial component functions of amplitude $2^{\frac{n+i}{2}}$ then we write its amplitude distribution as $[0^{k_0},1^{k_1},2^{k_2},\dots,(n-1)^{k_{n-1}},n^{k_n}]$.
\end{definition}

\begin{example}
    Let $n$ be even. A quadratic APN function $F \colon \F_2^n \rightarrow \F_2^n$ is said to have \emph{classical Walsh spectrum}, if it has $\frac{2}{3}(2^n-1)$ bent components and $\frac{1}{3}(2^n-1)$ components with amplitude $2^{\frac{n+2}{2}}$. We then write this amplitude distribution as $[0^{\frac{2}{3}(2^n-1)},2^{\frac{1}{3}(2^n-1)}]$. 
\end{example}

For odd $n$, the amplitude distribution of quadratic APN functions $F \colon \F_2^n \rightarrow \F_2^n$ has been determined: all non-trivial components have amplitude $2^{\frac{n+1}{2}}$~\cite{carlet1998codes}. The $n$ even case, however, is still wide open, and essentially the only known constraint on the amplitudes comes from the well-known characterization of APN functions via the fourth moment of the Walsh transform, which, for quadratic APN functions, is equivalent to:
\begin{proposition}[{\cite[Proposition 9]{carlet2015boolean}}] \label{prop:fourth}
    Let $F \colon \F_2^n \rightarrow \F_2^n$ be a quadratic APN function, and let $2^{\frac{n+l_b}{2}}$ be the amplitude of component $F_b$. Then \[\sum_{b\in \F_2^n\setminus\{0\}}2^{l_b}=2(2^n-1).\]
\end{proposition}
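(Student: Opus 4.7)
The plan is to derive this identity from the classical fourth-moment characterization of APN functions, combined with the rigid structure of the Walsh spectrum of a quadratic function.

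First I would recall (or establish by direct expansion) the identity
\[
\sum_{a,b \in \F_2^n} W_F(b,a)^4 = 2^{2n} \cdot N_F,
\]
where $N_F$ denotes the number of $4$-tuples $(x_1, x_2, x_3, x_4) \in (\F_2^n)^4$ satisfying both $x_1+x_2+x_3+x_4 = 0$ and $F(x_1)+F(x_2)+F(x_3)+F(x_4) = 0$. This is standard: expand the fourth power, swap the order of summation, and apply $\sum_a (-1)^{\langle a, y \rangle_n} = 2^n[y=0]$ twice (once to enforce the linear constraint in $a$, and once to enforce the image constraint in $b$).

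Next I would evaluate $N_F$ using the APN hypothesis. Writing $u = x_1 + x_2$, $v = x_1 + x_3$ and denoting the discrete derivative $D_u F(x) := F(x) + F(x+u)$, the two constraints combine into $D_u D_v F(x_1) = 0$. The APN property forces the fibres of $D_u F$ (for $u \neq 0$) to be precisely the cosets $\{x, x+u\}$, so $D_u D_v F \equiv 0$ whenever any one of $u$, $v$, $u+v$ vanishes, and $D_u D_v F$ is nowhere zero otherwise. The three degenerate cases correspond to the pairings $x_1 = x_2$, $x_1 = x_3$, $x_1 = x_4$; each contributes $2^{2n}$ tuples, and inclusion--exclusion (the $2^n$ all-equal tuples are triply counted) yields $N_F = 3 \cdot 2^{2n} - 2 \cdot 2^n = 2^n(3 \cdot 2^n - 2)$.

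Finally I would exploit the quadratic structure of $F$: for each fixed $b$, Parseval together with $|W_F(b,a)| \in \{0, 2^{(n+l_b)/2}\}$ forces the nonzero value to be attained exactly $2^{n-l_b}$ times. Hence $\sum_a W_F(b,a)^4 = 2^{n-l_b} \cdot 2^{2(n+l_b)} = 2^{3n+l_b}$, and summing over $b$ gives $\sum_{a,b} W_F(b,a)^4 = 2^{3n} \sum_{b} 2^{l_b}$. Equating this with $2^{3n}(3 \cdot 2^n - 2)$ yields $\sum_b 2^{l_b} = 3 \cdot 2^n - 2$; peeling off the trivial component ($l_0 = n$, contributing $2^n$) gives $\sum_{b \neq 0} 2^{l_b} = 2(2^n - 1)$. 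The main obstacle is the second step: verifying that APN genuinely rules out all non-degenerate solutions of $D_u D_v F = 0$ and then executing the inclusion--exclusion over the three symmetric pairings cleanly; everything else is mechanical manipulation of the Walsh transform.
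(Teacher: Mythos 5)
Your proof is correct. Note that the paper does not prove this statement at all: it is quoted from the literature (Proposition 9 of the cited Carlet reference), so there is no internal proof to compare against. Your derivation is the standard one that underlies that citation --- the fourth-moment identity $\sum_{a,b}W_F(b,a)^4 = 2^{2n}\,\#\{(x_1,\dots,x_4): x_1+x_2+x_3+x_4=0,\ F(x_1)+F(x_2)+F(x_3)+F(x_4)=0\}$, the count $2^n(3\cdot 2^n-2)$ of such quadruples for an APN function via $D_uD_vF$, and the plateaued structure of quadratic components giving $\sum_a W_F(b,a)^4 = 2^{3n+l_b}$ --- and each step checks out, including the observation that $u+v=0$ is the pairing $x_1=x_4$ and the inclusion--exclusion yielding $3\cdot 2^{2n}-2\cdot 2^n$. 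Two small remarks: rename your quadruple count, since the paper reserves $N_F$ for the set of non-trivial non-bent components; and be aware that the paper later gives an alternative route to the same identity for even $n$ (the remark following Theorem~\ref{thm:conditions}, where it is shown to be equivalent to the packing condition of the vector space partition of Theorem~\ref{thm:dim=amplitude}), whereas your argument is the direct classical one and also covers odd $n$.
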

 
\subsection{Our contributions and structure of the paper}
This paper gives new conditions and connections for the amplitude distribution of quadratic APN functions $F \colon \F_2^n \rightarrow \F_2^n$, where $n$ is even. In Section~\ref{s:partitions}, we show that the amplitude distribution of $F$ gives rise to a vector space partition of $\F_2^n$ (Theorem~\ref{thm:dim=amplitude}). Using the established theory of vector space partitions, we are then able to give additional constraints on the amplitude distributions. For example, we are able to prove that $F$ can have only one component function with amplitude larger than $2^{3n/4}$ (Theorem~\ref{thm:onebig}), and derive bounds on the number of bent components of a quadratic APN function (Propositions~\ref{prop:LB1} and~\ref{prop:LB2}). We then provide an analysis of possible amplitude distributions of quadratic APN functions in low dimensions (up to $n=10$) and list all possible relevant vector space partitions.

In Section~\ref{s:blocking}, we show that the non-trivial non-bent component functions of a quadratic (not necessarily APN) function form a special blocking set in a suitable projective space. This leads again to additional constraints. In particular, we prove new results on the number of bent component functions of (general) quadratic functions (Theorem~\ref{thm:generalbentbound}). For the case of quadratic APN functions, our results yield the first nontrivial, general upper bound on the number of bent components (Theorem~\ref{thm:APN bent upper bound}).

In Section~\ref{s:combine}, we combine our results from the previous sections, excluding more possible amplitude distributions for quadratic APN functions. We show, for instance, that every quadratic APN function on $\F_2^n$ has a component function with amplitude less than $2^{3n/4}$ (Corollary~\ref{cor:small}). For the case $n=8$, we show that there are at most 18 different amplitude distributions (and thus extended Walsh spectra) for quadratic APN functions (Theorem~\ref{thm:dim8final}). 

In Section~\ref{s:equivalence}, we discuss how the blocking sets and vector space partitions associated with quadratic APN functions behave under equivalence. In particular, we prove that if a quadratic APN function is CCZ-equivalent to a permutation, it cannot have few non-bent components (Corollary~\ref{cor:CCZ}). We conclude the paper with a list of open problems.

\section{Vector Space Partitions in Relation to Amplitudes} \label{s:partitions}
 Throughout this section, let $n$ be even and $F\colon \F_2^n \rightarrow \F_2^n$ be a quadratic APN function.
% The goal of this section is to relate the amplitude distribution of $F$ to a vector space partition of $\F_2^n$. Existence and nonexistence results on vector space partitions can then be leveraged to obtain knowledge on possible amplitude distributions of quadratic APN functions.
% \subsection{Establishing the connection}
% Throughout this section, we use the usual second moment technique for the Walsh transform. 
% \begin{align*}
%     \left|W_F(b,a)\right|^2&=\left(\sum_{x\in\mathbb{F}_2^n}(-1)^{F_b(x)+\langle a,x \rangle }\right) \cdot \left(\sum_{y\in\mathbb{F}_2^n}(-1)^{F_b(y)+\langle a,y \rangle}\right) \\
%     &=\sum_{x,y\in\mathbb{F}_2^n}(-1)^{\langle b, F(x)+F(y)\rangle +\langle a,x+y \rangle}
% \end{align*} If we let $y \mapsto x+c$, we get
% \[\left|W_F(b,a)\right|^2 = \sum_{x,c\in\mathbb{F}_2^n}(-1)^{\langle b, F(x)+F(x+c)\rangle+ \langle a,c\rangle} = \sum_{x,c\in\mathbb{F}_2^n}(-1)^{\langle b, D_{F,c}(x)\rangle+ \langle a,c\rangle},\]
% where we 
Define the difference function $D_{F,a}(x) = F(x)+F(x+a)$. Since $F$ is APN and quadratic, for every nonzero $a$, the image set of $D_{F,a}$ is an affine hyperplane of $\F_2^n$.

We denote the affine hyperplanes by 
\[
    H_b=\{x\in \F_2^n: \langle b, x\rangle = 0\}, \;\; \overline{H_b}=\{x\in \F_2^n: \langle b, x\rangle = 1\}.\]
 Clearly, $H_b$ and $\overline{H_b}$ are complements in $\F_2^n$. We further define the following sets
 \begin{align*}
    T_b&=\{a \in \F_2^n: \image(D_{F,a})= H_b\} \cup \{0\},\;\; \overline{T_b}=\{a\in \F_2^n: \image(D_{F,a}) = \overline{H_b}\},\\
    V_b&=  T_b \cup \overline{T_b}.
 \end{align*}

% \begin{remark}
%     If $a\notin V_b$, then by definition $F(x+a)+F(x)\notin H_b$ or $F(x+a)+F(x)\notin\overline{H_b}$  \divyesh{feels like an incomplete argument}. Hence, these $a$'s contribute $0$ in the sum of the Walsh transform and we can observe that \begin{equation}
%         \left|W_F(b,0)\right|^2=2^n\left||T_b|-|\overline{T_b}|\right|.
%     \end{equation}
% \end{remark}

We have the following simple lemma about the structure and size of the sets $T_b$ and $V_b$. These results are essentially already given by Kyureghyan~\cite[Proposition 1]{kyureghyan2007crooked} with slightly different notation; we include the proof for completeness.

\begin{lemma} \label{lem:vs}
\leavevmode
    \begin{enumerate}[(i)]
        \item $T_b$ is a vector space over $\F_2$.
        \item If $\overline{T_b}$ is nonempty, then it is an affine space over $\F_2$  of the form $\overline{T_b}=c+T_b$ for some $c \notin T_b$.
        \item $V_b=T_b\cup\overline{T_b}$ is a vector space over $\F_2$ of dimension $\dim(T_b)$ (if $\overline{T_b}=\emptyset$), or $\dim(T_b)+1$ (if $\overline{T_b}\neq \emptyset$).
    \end{enumerate}
\end{lemma}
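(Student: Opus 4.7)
The plan is to exploit the quadratic structure of $F$ to rewrite $D_{F,a}$ in a form where the dependence on $a$ is transparent, then read off (i)--(iii) from the algebra of symmetric bilinear forms.

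First I would write $F(x) = Q(x) + L(x) + F(0)$ with $Q$ a quadratic form and $L$ linear, and let $B$ be the polar form of $Q$, so that $Q(x+a) = Q(x) + Q(a) + B(x,a)$. A short calculation then gives the key identity
\[
D_{F,a}(x) = F(x) + F(x+a) = B(x,a) + \bigl(F(a) + F(0)\bigr),
\]
so $D_{F,a}$ is an affine function of $x$ with linear part $x \mapsto B(x,a)$ and constant part $c_a := F(a) + F(0)$. Its image is thus $U_a + c_a$, where $U_a := \{B(x,a) : x \in \F_2^n\}$ is a subspace. Since $F$ is APN, $|U_a| = 2^{n-1}$ for $a \neq 0$, so $U_a$ is a linear hyperplane. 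Next I would observe that $\image(D_{F,a}) \subseteq H_b \cup \overline{H_b}$ iff $U_a \subseteq H_b$, i.e.\ iff $\langle b, B(x,a)\rangle = 0$ for all $x$. Since $x \mapsto \langle b, B(x,a)\rangle$ is itself a linear form in $x$ whose coefficients are linear in $a$, the condition $U_a \subseteq H_b$ cuts out a linear subspace of $\F_2^n$ in the variable $a$. This subspace is exactly $V_b$ (including $a=0$), which immediately gives the subspace assertion in (iii).

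For (i), for $a \in V_b$ we split according to whether $c_a \in H_b$ or $c_a \in \overline{H_b}$, the former being membership in $T_b$. The closure of $T_b$ under addition rests on the bilinear identity
\[
F(a_1+a_2) + F(a_1) + F(a_2) + F(0) = B(a_1,a_2),
\]
which yields $c_{a_1+a_2} = c_{a_1} + c_{a_2} + B(a_1,a_2)$. Pairing with $b$ and using $a_1 \in V_b$ (so $\langle b, B(x,a_1)\rangle = 0$ for all $x$, in particular for $x = a_2$) together with symmetry of $B$, the cross term vanishes and $\langle b, c_{a_1+a_2}\rangle = \langle b,c_{a_1}\rangle + \langle b,c_{a_2}\rangle = 0 + 0 = 0$. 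Hence $T_b$ is a subspace. For (ii), the same computation shows that for any fixed $a_0 \in \overline{T_b}$ and any $a \in \overline{T_b}$ one has $a + a_0 \in T_b$, so $\overline{T_b} = a_0 + T_b$ is an affine translate of $T_b$. For the dimension count in (iii), either $\overline{T_b}$ is empty and $V_b = T_b$, or $V_b = T_b \sqcup (a_0 + T_b)$ has twice the size of $T_b$, giving dimension $\dim(T_b) + 1$.

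The main (minor) obstacle is purely bookkeeping: correctly isolating the affine constant $c_a$ from the linear part $B(\cdot,a)$ of $D_{F,a}$ and then ensuring the cross term $\langle b, B(a_1,a_2)\rangle$ in the addition step really does vanish. Once the defining identity for $V_b$ as the kernel of $a \mapsto \langle b, B(\cdot,a)\rangle$ is in place, (i)--(iii) all follow from this one observation plus the polar identity for $F$.
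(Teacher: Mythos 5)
Your proof is correct, but it is organized differently from the paper's. The paper works entirely with telescoping differences: from $a_1,a_2\in T_b$ it writes $F(x+a_1+a_2)+F(x)=(F(x+a_1+a_2)+F(x+a_1))+(F(x+a_1)+F(x))$ and reads off membership in $H_b$ (resp.\ $\overline{H_b}$) directly, proving (i) and (ii) first and obtaining (iii) as the union of a subspace with one of its cosets; the symmetric bilinear map never appears explicitly. You instead make the quadratic structure explicit via $B(x,a)=F(x+a)+F(x)+F(a)+F(0)$, so that $\image(D_{F,a})=c_a+U_a$ with $c_a=F(a)+F(0)$ and $U_a=\image(B(\cdot,a))$; the APN property makes $U_a$ a linear hyperplane, and $V_b$ becomes exactly the kernel $\{a:\langle b,B(x,a)\rangle=0\ \forall x\}$, which gives the linearity in (iii) \emph{before} (i) and (ii), with $T_b$ then cut out of $V_b$ by the functional $a\mapsto\langle b,c_a\rangle$, which is linear on $V_b$ because the cross term $\langle b,B(a_1,a_2)\rangle$ vanishes there. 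The two arguments are algebraically the same identity in different clothing (your polarization identity is the paper's telescoping step), but your version buys a structural description of $V_b$ as the radical of the bilinear form of the component $F_b$, which in fact foreshadows Theorem~\ref{thm:dim=amplitude}, while the paper's version is more elementary and needs nothing beyond the standing fact that $\image(D_{F,a})$ is an affine hyperplane. Two small bookkeeping points to tighten: the condition you want is that $\image(D_{F,a})$ is contained in $H_b$ \emph{or} in $\overline{H_b}$ (as written, ``$\subseteq H_b\cup\overline{H_b}$'' is vacuous); in the closure step for $T_b$ you must also invoke the already-proved linearity of $V_b$ to know $a_1+a_2\in V_b$ before $\langle b,c_{a_1+a_2}\rangle=0$ upgrades to $a_1+a_2\in T_b$; and in (ii) you should state the companion computation (pairing value $1$) giving $a_0+T_b\subseteq\overline{T_b}$, since the inclusion you wrote only yields $\overline{T_b}\subseteq a_0+T_b$ — the paper spells out both directions.
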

\begin{proof}
\leavevmode
    \begin{enumerate}[(i)]
        \item $0 \in T_b$ for all $b$ by definition. Suppose $a_1, a_2 \in T_b$. Then for all $x \in \mathbb{F}_2^n$, \begin{equation*}F(x+a_1+a_2)+F(x)=(F(x+a_1+a_2)+F(x+a_1))+(F(x+a_1)+F(x))\end{equation*}
        Since $F(x+a_2)+F(x) \in H_b$ for all $x \in \F_2^n$, we have  $F(x+a_1+a_2)+F(x+a_1) \in H_b$ for all $x \in \F_2^n$. 
        Since $F(x+a_1)+F(x) \in H_b$ for all $x \in \F_2^n$, and $H_b$ is a vector space, it follows that $F(x+a_1+a_2)+F(x) \in H_b$ for all $x \in \F_2^n$, that is, $a_1+a_2 \in T_b$, and $T_b$ is a vector space over $\F_2$ as claimed.
        \item Assume $\overline{T_b} \neq \emptyset$. Suppose that we have some $c \in \overline{T_b}, a \in T_b$. We again have
        \begin{equation*} F(x+a+c)+F(x) = (F(x+a+c)+F(x+c))+(F(x+c)+F(x)).
        \end{equation*}
        By the assumptions on $a$ and $c$, we have $\langle b, F(x+a+c)+F(x+c) \rangle = 0$ and $\langle b, F(x+c)+F(x) \rangle = 1$. We conclude that $\langle b, F(x+a+c)+F(x) \rangle=1$ for all $x$, meaning that $a+c \in \overline{T_b}$, so $T_b+c \subseteq \overline{T_b}$. 
        
        Conversely, assume $a\in \overline{T_b}$. 
        Then  $\langle b, F(x+a+c)+F(x+c) \rangle = 1$ and $\langle b, F(x+c)+F(x) \rangle = 1$, so $\langle b, F(x+a+c)+F(x) \rangle=0$ for all $x$, meaning that $a+c \in T_b$, so $\overline{T_b}+c \subseteq T_b$.

        \item As proved in the previous item, $\overline{T_b}$ is either empty or a translate of $T_b$, i.e., $\overline{T_b} = c+T_b$ for some $c \notin T_b$. In that case, $V_b=T_b \cup \overline{T_b}$ is a vector space spanned by $c$ and $T_b$ and thus has dimension $\dim(T_b)+1$.
        \end{enumerate}
\end{proof} 

We will establish that the sets $V_b$ for $b \in \F_2^n\setminus \{0\}$ form a vector space partition of $\F_2^n$. We recall the definition of a (finite) vector space partition. 

\begin{definition} Let $\F_q^n$ be a vector space of dimension $n$ over a finite field $\mathbb{F}_q$. A vector space partition is a collection of subspaces $\mathcal{V}$ = \{$V_1,...,V_t$\} of  $\F_q^n$, such that every nonzero vector is contained in a unique member of $V_1,...,V_t$, i.e., \[
\F_q^n = \bigcup_{i=1}^t V_i \quad \text{and} \quad V_i \cap V_j = \{0\} \text{ for } i \ne j.
\] 
We say $\mathcal{V}$ is of type $[d_1^{n_1},d_2^{n_2},\dots,d_k^{n_k}]$ if for all $i \in \{1,\dots,k\}$, the partition $\mathcal{V}$ contains exactly $n_i$ vector spaces of dimension $d_i$. For uniqueness, we set $1\leq d_1<d_2<\dots$. 
 \end{definition}

The following theorem shows that the spaces $V_b$ are indeed a vector space partition of $\F_2^n$.

\begin{theorem}
   The set $\{V_b\colon b \in \F_2^n\setminus \{0\}\}$ is a vector space partition of $\mathbb{F}_2^n$. 
\end{theorem}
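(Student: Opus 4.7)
The plan is to rely on the fact, stated just before Lemma~\ref{lem:vs}, that for a quadratic APN function $F$ and every nonzero $a \in \F_2^n$ the set $\image(D_{F,a})$ is an affine hyperplane of $\F_2^n$. Since Lemma~\ref{lem:vs} already shows that each $V_b$ is a vector subspace of $\F_2^n$, only two things remain: to check that every nonzero vector lies in some $V_b$ (covering), and that distinct $V_b, V_{b'}$ intersect trivially (disjointness). Both reductions will come from the elementary bijective correspondence between affine hyperplanes of $\F_2^n$ and pairs $(b,c)$ with $b \in \F_2^n\setminus\{0\}$, $c \in \{0,1\}$.

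First I would handle covering. Every affine hyperplane of $\F_2^n$ is the solution set of a unique equation $\langle b, x\rangle = c$ with $b\neq 0$ and $c \in \{0,1\}$, hence coincides with $H_b$ (if $c=0$) or $\overline{H_b}$ (if $c=1$) for that unique $b$. Applied to a given nonzero $a$, the affine hyperplane $\image(D_{F,a})$ therefore produces a $b \in \F_2^n\setminus\{0\}$ with $a \in T_b$ or $a \in \overline{T_b}$, and so $a \in V_b$.

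Next I would verify disjointness. Suppose $a \in V_b \cap V_{b'}$ with $a \neq 0$ and $b \neq b'$. Then $\image(D_{F,a})$ belongs simultaneously to $\{H_b, \overline{H_b}\}$ and $\{H_{b'}, \overline{H_{b'}}\}$. Any identification $H_b = \overline{H_{b'}}$ (or vice versa) is ruled out because $H_b$ contains $0$ while $\overline{H_{b'}}$ does not; and an identification $H_b = H_{b'}$ (or $\overline{H_b} = \overline{H_{b'}}$, via a single translation) forces $b = b'$ since a nonzero linear functional on $\F_2^n$ is determined by its kernel. This contradicts $b \neq b'$, so $V_b \cap V_{b'} = \{0\}$ for $b \neq b'$, completing the proof. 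I do not anticipate a real obstacle here: the content of the theorem is entirely carried by the APN + quadratic property that guarantees $\image(D_{F,a})$ is an affine hyperplane; the remainder is a careful unpacking of the definitions of $T_b, \overline{T_b}, V_b$ and the labeling of hyperplanes by nonzero $b$.
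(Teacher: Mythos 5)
Your proposal is correct and follows essentially the same route as the paper: covering comes from the quadratic APN property forcing $\image(D_{F,a})$ to be an affine hyperplane, hence of the form $H_b$ or $\overline{H_b}$ for some nonzero $b$, and trivial pairwise intersection comes directly from the definitions of $T_b$, $\overline{T_b}$, $V_b$. The only difference is that you spell out the uniqueness of the label $b$ for a given hyperplane (which the paper treats as immediate), so no gap and no genuinely new method.
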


\begin{proof}
    $V_{b_1} \cap V_{b_2}=\{0\}$ follows immediately from the definition of the $V_b$. Since $F$ is a quadratic APN function, $\image(D_{F,a})$ is an affine hyperplane for any non-zero $a \in \F_2^n$, so $a \in T_b$ or $a \in \overline{T_b}$ for some nonzero $b$. In particular, $a \in V_b$ for some nonzero $b$.
\end{proof}
We can thus associate to every quadratic APN function $F$ a uniquely defined vector space partition $\mathcal{V}_F=\{V_b \colon b\ \in \F_2^n\setminus \{0\}, \dim(V_b)\geq 1\}$. Note that $|\mathcal{V}_F|$ can  be less than $2^n-1$ if $V_b=\{0\}$ for some non-zero $b$, and in fact that necessarily happens, as we will see later. 

As the following theorem shows, the dimension of the individual vector space $V_b\in \mathcal{V}_F$ is directly linked to the amplitude of $F_b$.

The proof technique of the following proposition is inspired  by a proof by Kyureghyan~\cite[Proof of Theorem 1]{kyureghyan2007crooked}, even though the goal and outcome in that paper is substantially different from the goal here.
\begin{theorem}
    
\label{thm:dim=amplitude}
    Let $n$ be even, $F\colon \F_2^n \rightarrow \F_2^n$ be a quadratic APN function, and $\mathcal{V}_F=\{V_b \colon b \in \F_2^n\setminus\{0\}, \dim(V_b)\geq 1\}\}$. Then $F_b$ has amplitude $2^{\frac{n+\dim(V_b)}{2}}$. 
    
    In particular, if $F$ has amplitude distribution $[0^{k_0},2^{k_2},\dots,(n-2)^{k_{n-2}},n^{k_{n}}]$, then $\mathcal{V}_F$ is of type $[2^{k_2},\dots,(n-2)^{k_{n-2}},n^{k_{n}}]$.
\end{theorem}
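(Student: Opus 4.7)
The plan is to identify $V_b$ with the radical of the symmetric bilinear form associated to the quadratic Boolean function $F_b$, and then invoke the standard Walsh-amplitude formula for a quadratic Boolean function in terms of that radical.

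To set things up, since $F$ is quadratic, each component $F_b(x)=\langle b,F(x)\rangle$ is quadratic, so
\begin{equation*}
B_b(x,y) := F_b(x+y)+F_b(x)+F_b(y)+F_b(0)
\end{equation*}
is a symmetric bilinear form on $\F_2^n$, and I would work with its radical $\rad(B_b)=\{a\in\F_2^n : B_b(x,a)=0 \text{ for all } x\in\F_2^n\}$. Using the quadratic identity
\begin{equation*}
F_b(x+a)+F_b(x) = F_b(a)+F_b(0)+B_b(x,a),
\end{equation*}
the map $x\mapsto\langle b,D_{F,a}(x)\rangle$ is constant in $x$ if and only if $B_b(\cdot,a)\equiv 0$, i.e.\ exactly when $a\in\rad(B_b)$. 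By the definitions of $T_b$ and $\overline{T_b}$, however, $V_b$ is precisely the set of $a$ for which $\langle b,D_{F,a}(\cdot)\rangle$ is a constant function (identically $0$ on $T_b$, identically $1$ on $\overline{T_b}$, with $a=0$ covered trivially). Hence $V_b=\rad(B_b)$; this identification is the main (and short) computational step.

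The second step is to invoke the classical fact that for any quadratic Boolean function $g$ on $\F_2^n$ with associated symmetric bilinear form $B$, one has $|W_g(a)|\in\{0,2^{(n+\dim\rad(B))/2}\}$ for every $a$. Since $x\mapsto F_b(x)+\langle a,x\rangle$ is quadratic with the same bilinear form $B_b$, applying this fact across all $a$ shows that every nonzero value $|W_F(b,a)|$ equals $2^{(n+\dim V_b)/2}$, which by definition is the amplitude of $F_b$, proving the first assertion.

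For the second assertion, $F_b$ has amplitude $2^{(n+j)/2}$ exactly when $\dim(V_b)=j$, so the multiset of dimensions of the $V_b$ is determined by the amplitude distribution. Since $B_b$ is alternating ($B_b(x,x)=0$), its rank is even, so $\dim V_b$ has the same parity as $n$, which explains why only even dimensions arise. The $k_0$ bent components correspond precisely to the $V_b=\{0\}$, which are excluded from $\mathcal V_F$ by definition, yielding the type $[2^{k_2},\dots,n^{k_n}]$ as claimed. The amplitude-formula input is standard and requires no further work here.
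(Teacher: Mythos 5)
Your proof is correct, and it takes a more modular route than the paper. You identify $V_b$ with $\rad(B_b)$, the radical of the symmetric bilinear form of the component $F_b$, and then quote the classical fact that a quadratic Boolean function with radical of dimension $k$ has Walsh values in $\{0,2^{(n+k)/2}\}$; the paper instead computes $(W_F(b,c))^2=\sum_{a}(-1)^{\langle c,a\rangle}\sum_x(-1)^{\langle b,F(x)+F(x+a)\rangle}$ from scratch, using the trichotomy $a\in T_b$, $a\in\overline{T_b}$, $a\notin V_b$ and the translate structure $\overline{T_b}=v+T_b$ from its Lemma~\ref{lem:vs} to evaluate the outer character sum in two cases. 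At bottom these are the same computation (the classical radical formula is proved by exactly this squaring argument, with the linear function on the radical playing the role of the $T_b/\overline{T_b}$ split), but your packaging is shorter and more conceptual: it makes transparent why the exponent is $\dim V_b$ and, as you note, why only even dimensions occur (the form is alternating), whereas the paper's version is self-contained and additionally exhibits explicitly for which $c$ the Walsh value is nonzero. One small point to make explicit: the identification of $V_b$ with the set of $a$ for which $\langle b,D_{F,a}(\cdot)\rangle$ is constant uses that $\image(D_{F,a})$ is a \emph{full} affine hyperplane (the quadratic APN hypothesis recalled at the start of the section) — constancy only gives containment in $H_b$ or $\overline{H_b}$, and equality of the image with the hyperplane is what membership in $T_b$ or $\overline{T_b}$ demands; the reverse inclusion $V_b\subseteq\rad(B_b)$ is immediate. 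This is a one-line remark, not a gap.
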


\begin{proof}
    We calculate $(W_F(b,c))^2$. 
    We have 
        \begin{align*}
        (W_F(b,c))^2&=\left(\sum_{x \in \F_2^n}(-1)^{\langle b, F(x)\rangle + \langle c, x\rangle}\right)\cdot \left(\sum_{y \in \F_2^n}(-1)^{\langle b, F(y)\rangle + \langle c, y\rangle}\right) \\
        &=\sum_{x,a \in\mathbb{F}_2^n}(-1)^{\langle b, F(x)+F(x+a)\rangle + \langle c, a\rangle} \\ &= \sum_{a \in\mathbb{F}_2^n}(-1)^{\langle c, a\rangle}\sum_{x \in\mathbb{F}_2^n}(-1)^{\langle b, F(x)+F(x+a)\rangle}.
    \end{align*}
    Observe that for a fixed $a$, the equation $\langle b, F(x)+F(x+a)\rangle=0$ is either satisfied by all $x \in \F_{2}^n$ (if $a \in T_b$), by no $x \in  \F_{2}^n$ (if $a \in \overline{T_b}$), or by exactly $2^{n-1}$ distinct  $x \in \F_{2}^n$ (if $a \notin V_b$). In particular, we have 
    \[\sum_{x \in\mathbb{F}_2^n}(-1)^{\langle b, F(x)+F(x+a)\rangle}=\begin{cases}
        2^n, & \text{ if }  a \in T_b \\
        -2^n, & \text{ if }  a \in \overline{T_b}\\
        0, & \text{ else.} 
    \end{cases}\]
    We now distinguish two cases:
    \\ \textbf{Case 1:} $\overline{T_b}\neq\emptyset$. Then, $\overline{T_b} = v+T_b$ for some $v \notin T_b$ by Lemma~\ref{lem:vs}(ii).     
    So,
    \begin{align*}
        (W_F(b,c))^2&= 2^n\left(\sum_{a \in T_b}(-1)^{\langle c, a\rangle}-\sum_{a \in\overline{T_b}}(-1)^{\langle c, a\rangle}\right) \\ &= 2^n\left(\sum_{a \in T_b}\left((-1)^{\langle c, a\rangle}-(-1)^{\langle c, v+a\rangle}\right)\right)\\ &= 2^n\left(\sum_{a \in T_b}\left((-1)^{\langle c, a\rangle}-(-1)^{\langle c, v\rangle}(-1)^{\langle c, a\rangle}\right)\right) \\ &= \begin{cases}
            0, & \text{if } (-1)^{\langle c,v\rangle}=1 \text{ or } c \notin T_b^\perp, \\ 2^n\cdot 2\left|T_b\right|=2^n\left|V_b\right|, & \text{else}.
        \end{cases}
    \end{align*}
    \textbf{Case 2:} $\overline{T_b}=\emptyset$.
    Then, \begin{equation*} \begin{split}
        (W_F(b,c))^2&=2^n\cdot\sum_{a\in T_b} (-1)^{\langle c,a\rangle} \\ &= 2^n\left(\left|T_b\cap H_c\right|-\left(\left|T_b\right|-\left|T_b\cap H_c\right|\right)\right) \\ &= 2^n \begin{cases}
            \left|T_b\right|, & \text{if } T_b \subseteq H_c, \\ 0, & \text{if } T_b \nsubseteq H_c.
        \end{cases}
    \end{split} 
    \end{equation*}
    The claim follows.
\end{proof}

\begin{remark}
    The assumption that $n$ is even is not essential in the proof of Theorem~\ref{thm:dim=amplitude}, and a similar result can be derived in the $n$ odd case. However, it is known that for odd $n$, all quadratic APN functions are almost bent, which means all non-trivial component functions have amplitude $2^{\frac{n+1}{2}}$. The corresponding vector space partition is then of type $[1^{2^n-1}]$, i.e., a trivial decomposition of $\F_2^n$ into one dimensional subspaces.
\end{remark}

We state some immediate corollaries from this main result. 

For the rest of the paper, we denote by $N_F$ the set of non-trivial non-bent component functions of $F$, i.e., 
\[N_F = \{b \in \F_2^{n}\setminus \{0\} \colon F_b \text{ is not bent}\}.\]
Note that by Theorem~\ref{thm:dim=amplitude},  $b \in N_F$ if and only if $V_b \neq\{0\}$. Recalling that we discard zero-dimensional spaces in the vector space partition $\mathcal{V}_F$, we get:
\begin{corollary} \label{cor:nb_partition}
    Let $n$ be even, $F\colon \F_2^n \rightarrow \F_2^n$ be a quadratic APN function, and $\mathcal{V}_F=\{V_b \colon b \in \F_2^n\setminus\{0\},\dim(V_b)\geq 1\}$ be its associated vector space partition of $\F_2^n$. Then, 
    $|N_F|=|\mathcal{V}_{F}|$. 
\end{corollary}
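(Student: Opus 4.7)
The plan is to exhibit a bijection between $N_F$ and $\mathcal{V}_F$ via the map $\phi\colon b \mapsto V_b$.

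First, I would use Theorem~\ref{thm:dim=amplitude} to translate the non-bent condition on components into a dimension condition on the associated subspaces. Namely, $F_b$ has amplitude $2^{(n+\dim V_b)/2}$, and a component is bent exactly when its amplitude equals $2^{n/2}$; hence $F_b$ is bent if and only if $\dim V_b = 0$, i.e., $V_b = \{0\}$. Unpacking the definition of $N_F$, this gives
\[
 N_F = \{b \in \F_2^n\setminus\{0\} : \dim(V_b) \geq 1\},
\]
which is, by the very definition of $\mathcal{V}_F$, the set of indices $b$ whose associated subspace appears in $\mathcal{V}_F$. In particular, $\phi(N_F) = \mathcal{V}_F$, so $\phi$ is surjective.

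Next, I would establish injectivity of $\phi$. Suppose $b_1, b_2 \in \F_2^n\setminus\{0\}$ are distinct. Then the four affine hyperplanes $H_{b_1}, \overline{H_{b_1}}, H_{b_2}, \overline{H_{b_2}}$ are pairwise distinct. For any nonzero $a \in V_{b_1}$, by definition of $T_{b_1}$ and $\overline{T_{b_1}}$ the image $\image(D_{F,a})$ equals either $H_{b_1}$ or $\overline{H_{b_1}}$, so it cannot equal $H_{b_2}$ or $\overline{H_{b_2}}$, and therefore $a \notin V_{b_2}$. This shows $V_{b_1} \cap V_{b_2} = \{0\}$, so if $V_{b_1} = V_{b_2}$, both must be $\{0\}$, contradicting $b_1, b_2 \in N_F$. (This is of course consistent with the vector space partition property proved just above, but here we need it for possibly zero-dimensional $V_b$'s too, so I would verify it directly from the definitions as above.)

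Combining surjectivity and injectivity, $\phi$ is a bijection from $N_F$ to $\mathcal{V}_F$, giving $|N_F|=|\mathcal{V}_F|$. There is no real obstacle here; the corollary is essentially a bookkeeping consequence of Theorem~\ref{thm:dim=amplitude} and the definition of $\mathcal{V}_F$, with the only mild subtlety being that one should justify $V_b$ depends injectively on $b$ whenever $V_b \neq \{0\}$, which follows from the distinctness of the hyperplanes $H_{b_1}, \overline{H_{b_1}}, H_{b_2}, \overline{H_{b_2}}$.
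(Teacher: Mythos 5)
Your proposal is correct and follows essentially the same route as the paper: the corollary is recorded there as an immediate consequence of Theorem~\ref{thm:dim=amplitude} (which gives $b\in N_F$ iff $V_b\neq\{0\}$) together with the fact that the $V_b$ pairwise intersect trivially, so distinct indices with $\dim(V_b)\geq 1$ yield distinct members of $\mathcal{V}_F$. Your explicit injectivity check merely re-derives from the definitions the disjointness already established in the vector space partition theorem, so nothing substantively new is added.
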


Since amplitudes are necessarily integers, Theorem~\ref{thm:dim=amplitude} immediately yields:
\begin{corollary} \label{cor:even}
    Let $n$ be even and $F\colon \F_2^n \rightarrow \F_2^n$ be a quadratic APN function with associated vector space partition $\mathcal{V}_F=\{V_b \colon b \in \F_2^n\setminus\{0\},\dim(V_b)\geq 1\}$. Then $\dim(V_b)$ is even for every $V_b \in \mathcal{V}_F$.
\end{corollary}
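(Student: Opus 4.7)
The plan is to leverage Theorem~\ref{thm:dim=amplitude} almost directly, with the only extra ingredient being an integrality observation about Walsh coefficients. First I would note that $W_F(b,c) \in \Z$ for every $b,c \in \F_2^n$, since by definition $W_F(b,c) = \sum_{x\in \F_2^n}(-1)^{F_b(x)+\langle x,c\rangle_n}$ is a finite sum of $\pm 1$ terms. Consequently, the amplitude of every component function $F_b$ is an integer, and in fact an integer power of $2$.

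Next, I would apply Theorem~\ref{thm:dim=amplitude}, which gives that the amplitude of $F_b$ equals $2^{(n+\dim(V_b))/2}$. For this quantity to be an integer, the exponent $(n+\dim(V_b))/2$ must itself be a non-negative integer; equivalently, $n+\dim(V_b)\equiv 0 \pmod{2}$. Since $n$ is even by hypothesis, this forces $\dim(V_b)$ to be even, as claimed.

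There is no real obstacle here; this is a one-line observation that the formula of Theorem~\ref{thm:dim=amplitude} produces an integer only when the dimension has the correct parity. The only care needed is to confirm that the case $V_b=\{0\}$ is appropriately excluded (which it is, since $\mathcal{V}_F$ is defined to contain only subspaces of dimension at least $1$) and to note that Theorem~\ref{thm:dim=amplitude} has already done all of the work of relating $\dim(V_b)$ to the amplitude.
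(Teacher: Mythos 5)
Your argument is correct and is exactly the paper's reasoning: the paper also deduces the corollary immediately from Theorem~\ref{thm:dim=amplitude} together with the observation that amplitudes are integers, forcing $(n+\dim(V_b))/2 \in \Z$ and hence $\dim(V_b)$ even. Nothing further is needed.
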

% Recalling Definition~\ref{def:amdis}, our focus on APN functions means that for a fixed $b\in\mathbb{F}_2^n$, we have 
% \[
% \left|W_F(b,a)\right|^2\in\{0,2^{n+k}\},
% \] for $0\leq k\leq \frac{n}{2}$, $k$ even, where we call $2^{n+k}$ the amplitude.  Since $V_b$ is a subspace of $\mathbb{F}_2^n$ for each $b$, we can describe Lemma $2.5$ in terms of the dimension of $V_b$. In other words, \[
% \left|W_F(b,0)\right|^2\in\{0, 2^{n+dim(V_b)}\}.
% \]
% Lemma $2.5$ allows us to establish the following main theorem of this section regarding the connection between the dimension of the vector space partition $V_b$ and a component's amplitude. 
% \begin{theorem}
% \label{thm:dimension=amplitude}
% Let $F:\mathbb{F}_2^n\mapsto\mathbb{F}_2^n$ be a quadratic APN function and $b\in\mathbb{F}_2^n$. Then for component $F_b$ with amplitude $2^{n+k}$, $k=\dim(V_b)$.
% \end{theorem}

\subsection{Conditions on Vector Space Partitions and their Consequences to Amplitude Distributions of APN Functions}

We state some conditions on amplitude distributions that follow from the connection to vector space partitions in Theorem~\ref{thm:dim=amplitude}. For a survey on vector space partitions, we refer to~\cite{heden2012survey}.

We start with the fundamental conditions that follow from counting arguments and the dimension formula.
\begin{theorem} \label{thm:conditions}
    Let $n$ be even and $F \colon \F_{2}^n \rightarrow \F_{2}^n$ be a quadratic APN function with amplitude distribution $[0^{k_0},2^{k_2},\dots,n^{k_{n}}]$. Then 
    \begin{enumerate}[(i)]
        \item $\displaystyle\sum_{i=1}^{n/2} k_{2i}(2^{2i}-1)=2^n-1$. (packing condition).
        \item If $i>\frac{n}{2}$, then $k_i\leq1$. (dimension bound I).
        \item If $k_i, k_j\geq 1$, $i\neq j$, then $i+j\leq n$. (dimension bound II).
    \end{enumerate}
\end{theorem}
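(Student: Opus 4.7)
The plan is to translate each of the three claims into a statement about the associated vector space partition $\mathcal{V}_F$ via Theorem~\ref{thm:dim=amplitude}, which identifies $k_i$ with the number of dimension-$i$ subspaces in $\mathcal{V}_F$. Note that Corollary~\ref{cor:even} guarantees only even dimensions occur, which is why only even indices appear in the statement. Once the translation is done, all three conditions reduce to standard facts about vector space partitions.

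For (i), I would argue by direct counting. The subspaces in $\mathcal{V}_F$ partition $\F_2^n \setminus \{0\}$, and a subspace of dimension $d$ contains exactly $2^d - 1$ nonzero vectors. Summing over all members of the partition yields
\[
    \sum_{d \geq 1} k_d \, (2^d - 1) \;=\; 2^n - 1,
\]
and restricting the sum to even $d = 2i$ via Corollary~\ref{cor:even} gives the packing condition.

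For (ii) and (iii), I would use the Grassmann dimension formula. Any two distinct members $U, W \in \mathcal{V}_F$ satisfy $U \cap W = \{0\}$, so $\dim(U) + \dim(W) = \dim(U + W) \leq n$. Statement (iii) is then immediate: if $i \neq j$ with $k_i, k_j \geq 1$, we pick one subspace of each dimension and conclude $i + j \leq n$; the case $i = j$ requires $k_i \geq 2$, giving the same conclusion for two distinct dimension-$i$ subspaces. Statement (ii) follows as a special case: if $i > n/2$ admitted $k_i \geq 2$, then two distinct dimension-$i$ members of $\mathcal{V}_F$ would violate $2i \leq n$.

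There is essentially no obstacle here: all the conceptual weight lies in Theorem~\ref{thm:dim=amplitude}, which has already been established. What remains are elementary counting and the dimension formula, both of which are standard tools in the theory of vector space partitions (see, e.g.,~\cite{heden2012survey}).
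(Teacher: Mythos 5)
Your proposal is correct and follows essentially the same route as the paper: translate the amplitude distribution into the vector space partition $\mathcal{V}_F$ via Theorem~\ref{thm:dim=amplitude}, obtain (i) by counting nonzero vectors across the partition, and obtain (ii) and (iii) from the fact that distinct members meet only in $0$, so their dimensions sum to at most $n$. Your extra care with the $i=j$ case (requiring two distinct subspaces) is a sensible reading of the statement and matches the paper's intent.
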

\begin{proof}
     By Theorem~\ref{thm:dim=amplitude}, the vector space partition $\mathcal{V}_F$ of  $\F_2^n$  associated with $F$ is of type $[2^{k_2},\dots,n^{k_{n}}]$. Item (i) now follows from basic counting since all elements in $\F_2^n \setminus \{0\}$ are contained in exactly one vector space in $\mathcal{V}_F$. Items (ii) and (iii) follow from the dimension formula that implies that for $V_1,V_2 \in \mathcal{V}_F$, $V_1\neq V_2$, we necessarily have $\dim(V_1)+\dim(V_2) \leq n$.
\end{proof}

\begin{remark}
    It turns out that the packing condition given in Theorem~\ref{thm:conditions} is equivalent to the condition from the fourth moment of the Walsh transform in Proposition~\ref{prop:fourth}. Recall that $2^{\frac{n+l_b}{2}}$ is the amplitude of the component $F_b$. Subtracting one from each summand in Proposition~\ref{prop:fourth} yields,
    \[\sum_{b \in \F_2^n \setminus \{0\}}(2^{l_b}-1) = 2^n-1.\]
    Each component with amplitude $2^{\frac{n+i}{2}}$ corresponds to a vector space of dimension $i$, and we get precisely Condition (i) in Theorem~\ref{thm:conditions}. This correspondence is reversible, so the packing condition is equivalent to Proposition~\ref{prop:fourth}. In particular, the well known trivial upper bound on the number of non-bent component functions $|N_F|\leq (2^n-1)/3$ follows immediately from the packing condition since every vector space belonging to a non-bent component corresponds to a vector space containing at least $3$ non-zero vectors.
\end{remark}

Interestingly, parts (ii) and (iii) of Theorem~\ref{thm:conditions} imply that $F$ can only have one non-trivial component function with large amplitude:
\begin{theorem} \label{thm:onebig}
     Let $n$ be even and $F \colon \F_{2}^n \rightarrow \F_{2}^n$ be a quadratic APN function with linearity $L(F)=2^{\frac{n+l}{2}}$ with $l > n/2$. Then $F$ has exactly one component function with amplitude $2^{\frac{n+l}{2}}$, and all other components have amplitude at most $2^{\frac{2n-l}{2}}$.
     In particular, any quadratic APN function $F \colon \F_{2}^n \rightarrow \F_{2}^n$ has at most one component function with amplitude  larger than $2^{3n/4}$.
\end{theorem}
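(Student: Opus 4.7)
The plan is to reduce everything to the vector space partition $\mathcal{V}_F$ via Theorem~\ref{thm:dim=amplitude} and then apply parts (ii) and (iii) of Theorem~\ref{thm:conditions} essentially verbatim.

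First I would note that if $F$ has linearity $2^{(n+l)/2}$, then there exists a component $F_b$ of amplitude $2^{(n+l)/2}$ in the nontrivial part of the spectrum. By Theorem~\ref{thm:dim=amplitude}, the corresponding subspace $V_b$ in the vector space partition $\mathcal{V}_F$ has $\dim(V_b) = l$. Since we assume $l > n/2$, dimension bound I (Theorem~\ref{thm:conditions}(ii)) gives $k_l \leq 1$, meaning there is exactly one component with amplitude $2^{(n+l)/2}$.

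Next I would use dimension bound II (Theorem~\ref{thm:conditions}(iii)) to control all other amplitudes. If $V_{b'} \in \mathcal{V}_F$ is any other subspace (with $b' \neq b$), then $\dim(V_b) + \dim(V_{b'}) \leq n$ forces $\dim(V_{b'}) \leq n - l$. Translating back through Theorem~\ref{thm:dim=amplitude}, the amplitude of $F_{b'}$ is at most $2^{(n+(n-l))/2} = 2^{(2n-l)/2}$. Bent components (which satisfy $\dim(V_{b'}) = 0$ and thus are not recorded in $\mathcal{V}_F$) are trivially bounded by $2^{n/2} \leq 2^{(2n-l)/2}$ since $l \leq n$, so the bound holds for every non-trivial component other than $F_b$.

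For the ``in particular'' statement, I would verify the elementary equivalence that $2^{(n+l)/2} > 2^{3n/4}$ if and only if $l > n/2$, and apply the first part of the theorem. There is no real obstacle to this proof beyond keeping the translation amplitude~$\leftrightarrow$~dimension straight, since Theorem~\ref{thm:conditions} has already done all the combinatorial work.
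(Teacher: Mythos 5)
Your proposal is correct and follows essentially the same route as the paper: uniqueness from the dimension bound $k_l\leq 1$ for $l>n/2$ (Theorem~\ref{thm:conditions}(ii)) and the bound $2^{\frac{2n-l}{2}}$ on all other amplitudes from the pairwise dimension bound (Theorem~\ref{thm:conditions}(iii)), with the translation between amplitudes and subspace dimensions handled by Theorem~\ref{thm:dim=amplitude}. The only cosmetic difference is that you argue directly on the subspaces $V_b$ while the paper phrases the contradiction in terms of the multiplicities $k_i$ of the amplitude distribution, which is the same argument.
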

\begin{proof}
    $F$ has linearity $L(F)=2^{\frac{n+l}{2}}$, so it has at least one component function $F_b$ with amplitude $2^{\frac{n+l}{2}}$. Assume there is a second component function with the same amplitude. The amplitude distribution of $F$ is then $[0^{k_0},2^{k_2},\dots,l^{k_l}]$ with $k_{l} \geq 2$, contradicting Theorem~\ref{thm:conditions} (ii). So $F_b$ is the unique component function with amplitude $2^{\frac{n+l}{2}}$. 
    
    Assume now that there is another component function $F_{b'}$ with amplitude larger than $2^{\frac{2n-l}{2}}$. The amplitude distribution of $F$ thus satisfies $k_{l} =1$ and $k_{i} \geq 1$ for some $i > n-l$, contradicting Theorem~\ref{thm:conditions} (iii). 
\end{proof}

\begin{example} \label{ex:maxlinearity}
    It is well known (see, e.g.~\cite[Proposition 161]{Carlet2021_Book}) that an APN function $F\colon \F_{2}^n \rightarrow \F_2^n$ cannot have linearity $2^n$ as long as $n>2$. Quadratic APN functions with maximal linearity thus have linearity $2^{n-1}$. Examples of such functions for $n=8$ were found in~\cite{beierle2021new}. It was also shown using completely different techniques that the amplitude distribution of quadratic APN functions with linearity $2^{n-1}$ is $[0^{2^n-2^{n-2}-2},2^{2^{n-2}},(n-2)^{1}]$, see~\cite[Theorem 4]{beierle2022trims}. Using our tools, we can recover this result easily: A quadratic APN function with linearity $2^{n-1}$ has by Theorem~\ref{thm:onebig} only one component with amplitude $2^{n-1}$, and all other components have linearity $2^{\frac{n+k}{2}
    }$ with $k \in \{0,2\}$. The multiplicities for these amplitudes follow then easily  from the packing condition in Theorem~\ref{thm:conditions} (i).
\end{example}
% \begin{proof}
%     \begin{enumerate}[(i)]
%         \item Each subspace of dimension $i$ has $(2^i-1)$ nonzero elements, and the sum of the amount of nonzero elements in every subspace is the amount of nonzero elements in $\mathbb{F}_2^n$, which is $2^n-1$.
%         \item It is well-known that for 2 spaces $V_1$, $V_2$, $\mdim(V_1)+\mdim(V_2)=\mdim(V_1 \cap V_2)+\mdim(V_1 \oplus V_2)$. This means that if $\mdim(V_1)+\mdim(V_2) > n$, they cannot trivially intersect, meaning they cannot both be part of the same vector space partition. Condition iii follows from the case where $\mdim(V_1)=\mdim(V_2)$.
%         \item Condition iv follows from the case where $\mdim(V_1) \neq \mdim(V_2)$.
%     \end{enumerate}
% \end{proof}
We now mention some more elaborate bounds on vector space partitions. 

Let $\mathcal{V}$ be a vector space partition of type $[d_1^{n_1},d_2^{n_2},\dots,d_k^{n_k}]$. 
The \emph{tail} of $\mathcal{V}$ is the smallest $d_i$ such that $n_i \geq 1$, i.e., the smallest dimension of a subspace in $\mathcal{V}$. We then call $n_i$ the \emph{length} of the tail of $\mathcal{V}$.

We have the following result on tails of vector space partitions:
\begin{theorem}\cite{Heden2009}
\label{thm: Heden}
    Let $\mathcal{V}$ be a partition of $\F_2^n$, and let $k$ denote the length of the tail of $\mathcal{V}$, let $d_1$ denote the tail of $\mathcal{V}$, and let $d_2$ denote the dimension of the vector spaces of second lowest dimension. Then
    \begin{enumerate}[(i)]
        \item if $2^{d_2-d_1}$ does not divide $k$ and $d_2<2d_1$, then $k\geq2^{d_1}+1$.
        \item if $2^{d_2-d_1}$ does not divide $k$ and $d_2\geq 2d_1$, then either $d_1$ divides $d_2$ and $k=\frac{2^{d_2}-1}{2^{d_1}-1}$ or $k>2\cdot2^{d_2-d_1}$.
        \item if $2^{d_2-d_1}$ divides $k$ and $d_2<2d_1$, then $k\geq 2^{d_2}-2^{d_1}+2^{d_2-d_1}$.
        \item if $2^{d_2-d_1}$ divides $k$ and $d_2\geq2d_1$, then $k\geq2^{d_2}$.
    \end{enumerate}
\end{theorem}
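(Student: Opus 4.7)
The plan is to prove the theorem by carefully combining two standard tools in the theory of vector space partitions: covering/counting arguments for the tail subspaces, and reduction to smaller quotient partitions induced on subspaces. Throughout I would fix a subspace $U \in \mathcal{V}$ of dimension $d_2$ (such a $U$ exists because $d_2$ is the second smallest dimension appearing in $\mathcal{V}$), and let $W_1,\dots,W_k$ denote the tail subspaces of dimension $d_1$.

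First I would establish a local structure result around $U$. For each tail subspace $W_i$ we have $W_i \cap U = \{0\}$, so $U + W_i$ has dimension $d_1+d_2$, and the intersection of $U+W_i$ with the other members of $\mathcal{V}$ induces a vector space partition of $U+W_i$. I would count $|W_i| - 1 = 2^{d_1}-1$ nonzero vectors in this induced structure to obtain the basic identity $\sum_{V \in \mathcal{V}} (|V \cap (U+W_i)| - 1) = 2^{d_1+d_2}-1$. Cycling through the tail, and summing over $i=1,\dots,k$, yields a counting equation that connects $k$ to the number of subspaces of $\mathcal{V}$ that meet $U+W_i$ non-trivially outside $U$ and $W_i$.

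Next I would extract the divisibility modulus $2^{d_2-d_1}$. The natural way is to partition the tail according to which coset of a fixed $(n-d_2)$-dimensional complement of $U$ their translates land in, or equivalently to look at the image of the tail in the quotient $\F_2^n/U_0$ for a well-chosen subspace $U_0 \subseteq U$ of dimension $d_2-d_1$. Any tail subspace $W_i$, pushed into this quotient, covers a $d_1$-dimensional subspace, and orbits under translation by $U_0$ have size a power of $2$. Tracking when $k$ is forced to be a multiple of $2^{d_2-d_1}$ versus when it is not is the pivot of the whole theorem: in the non-divisible cases (i) and (ii), the existence of an "unmatched" coset forces additional tail subspaces to appear, giving the bound $k\geq 2^{d_1}+1$ (when $d_2<2d_1$, preventing a whole $d_2$-space from being formed by tail pieces) or forces a rigid $(2^{d_2}-1)/(2^{d_1}-1)$-spread inside a $d_2$-subspace (when $d_1 \mid d_2$), with the alternative being the much larger bound $k > 2 \cdot 2^{d_2-d_1}$.

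Finally, for cases (iii) and (iv), where divisibility holds, I would use a direct covering argument: a maximal set of tail subspaces forming an $\F_2$-linear spread of some $d_2$-dimensional subspace accounts for $(2^{d_2}-1)/(2^{d_1}-1)$ of them, and the residual subspaces force the ambient bound to inflate by a factor depending on whether $d_2 \leq 2d_1$ (allowing partial overlap of the spans, giving $k \geq 2^{d_2}-2^{d_1}+2^{d_2-d_1}$) or $d_2 \geq 2d_1$ (where the spans must be essentially disjoint, forcing $k \geq 2^{d_2}$). The main obstacle will be the bookkeeping in the second step: making precise the choice of auxiliary subspace $U_0$ and verifying that the resulting partition in the quotient really does inherit the divisibility condition on tail length, rather than on some related parameter. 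Once that is nailed down, the four bounds fall out of elementary counting applied to the two cases $d_2 \lessgtr 2d_1$.
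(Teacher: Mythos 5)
The paper does not prove this statement at all: it is quoted verbatim from Heden's paper \cite{Heden2009} on the length of the tail of a vector space partition, so the benchmark is Heden's original argument, which proceeds by analyzing the union $T$ of the tail spaces through its intersections with hyperplanes (every member of the partition meets a hyperplane either entirely or in codimension one), extracting divisibility conditions on $|T|$ modulo powers of $2$, and, for case (iv), invoking a correspondence with perfect $1$-error-correcting codes. Your outline takes a different route, but as written it is a plan rather than a proof, and the gaps sit exactly at the decisive points.

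Concretely: (a) while the intersections $V\cap(U+W_i)$, $V\in\mathcal{V}$, do partition $U+W_i$, their dimensions are not controlled --- members other than $U$ and $W_i$ may meet $U+W_i$ in subspaces of any dimension below $d_1$ (or trivially), so the induced partition need not have tail dimension $d_1$ or second dimension $d_2$, and your counting identity over $i=1,\dots,k$ gives no handle on $k$ itself. (b) The pivotal step --- converting the hypothesis that $2^{d_2-d_1}$ does or does not divide $k$ into the bounds of (i) and the dichotomy of (ii) --- is asserted (``forces additional tail subspaces'', ``forces a rigid spread'') but no mechanism is given; your quotient device modulo a $(d_2-d_1)$-dimensional $U_0\subseteq U$ does not supply one, since images of distinct tail spaces in $\F_2^n/U_0$ can coincide or intersect nontrivially, so ``orbits of size a power of $2$'' does not translate into divisibility statements about $k$, and you yourself flag this bookkeeping as unresolved. (c) In (iii) and (iv) there is no reason the tail spaces should contain an $\F_2$-spread of some $d_2$-dimensional subspace, and the stated bounds $2^{d_2}-2^{d_1}+2^{d_2-d_1}$ and $2^{d_2}$ do not follow from the covering argument you sketch; case (iv) in particular is the hardest part of Heden's theorem and needs machinery (the link to perfect codes) entirely absent from your plan. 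So the proposal, as it stands, does not establish the theorem: its key steps are missing or would fail.
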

A tightening of (iii) was provided in \cite{kurzconstructions}; however for our purposes, bounds (ii) and (iv) are the only relevant bounds, thus we exclude the tightened bound.

We translate these results on tails to amplitude distributions via Theorem~\ref{thm:dim=amplitude}.
\begin{theorem}
    Let $n$ be even and $F\colon \F_{2}^n \rightarrow \F_2^n$ be a quadratic APN function. Let $2^{\frac{n+d_1}{2}}$ be the smallest non-bent amplitude of $F$, $2^{\frac{n+d_2}{2}}$ the second smallest non-bent amplitude of $F$, and $k$ the number of component functions with amplitude $2^{\frac{n+d_1}{2}}$. Then:  \begin{enumerate}[(i)]
        \item if $2^{d_2-d_1}$ does not divide $k$ and $d_2<2d_1$, then $k\geq2^{d_1}+1$.
        \item if $2^{d_2-d_1}$ does not divide $k$ and $d_2\geq 2d_1$, then either $d_1$ divides $d_2$ and $k=\frac{2^{d_2}-1}{2^{d_1}-1}$ or $k>2\cdot2^{d_2-d_1}$.
        \item if $2^{d_2-d_1}$ divides $k$ and $d_2<2d_1$, then $k\geq 2^{d_2}-2^{d_1}+2^{d_2-d_1}$.
        \item if $2^{d_2-d_1}$ divides $k$ and $d_2\geq2d_1$, then $k\geq2^{d_2}$.
    \end{enumerate}
\end{theorem}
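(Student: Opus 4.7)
The plan is to reduce this result directly to Theorem~\ref{thm: Heden} by using the dictionary between amplitudes of component functions and dimensions of subspaces in the associated vector space partition that was established in Theorem~\ref{thm:dim=amplitude}. First, I would invoke Theorem~\ref{thm:dim=amplitude} to associate with $F$ the vector space partition $\mathcal{V}_F$ of $\F_2^n$, and note that under this correspondence, a component $F_b$ of amplitude $2^{(n+i)/2}$ corresponds to a member $V_b \in \mathcal{V}_F$ of dimension $i$. Moreover, the non-bent component functions are exactly the ones associated with spaces $V_b$ of positive dimension, since bent components correspond to $\dim(V_b) = 0$ and are discarded from the partition.

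Next, I would translate the hypotheses: the smallest non-bent amplitude $2^{(n+d_1)/2}$ of $F$ corresponds to the smallest positive dimension appearing in $\mathcal{V}_F$, which is exactly the tail dimension $d_1$ in the sense of vector space partitions. Similarly, the second smallest non-bent amplitude corresponds to the second smallest dimension $d_2$ of a subspace in $\mathcal{V}_F$. Finally, $k$, the number of component functions with amplitude $2^{(n+d_1)/2}$, equals the number of subspaces in $\mathcal{V}_F$ of dimension $d_1$, i.e., the length of the tail of $\mathcal{V}_F$.

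With these identifications in place, each of the four items (i)--(iv) becomes exactly a restatement of the corresponding item in Theorem~\ref{thm: Heden} applied to $\mathcal{V}_F$. The proof therefore amounts to checking that the translation is done faithfully in all four cases and citing Theorem~\ref{thm: Heden}; no additional argument is required, and there is no genuine obstacle, since the main nontrivial content lies in Theorem~\ref{thm:dim=amplitude} (which converts the analytic Walsh-transform data into combinatorial partition data) and in Theorem~\ref{thm: Heden} (which supplies the tail bounds). One subtlety worth flagging explicitly is that ``smallest non-bent amplitude'' must be interpreted via $\mathcal{V}_F$ after zero-dimensional spaces have been removed; this is precisely why the bent components do not interfere with the tail analysis.
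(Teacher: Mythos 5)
Your proposal is correct and matches the paper's (implicit) argument exactly: the paper states this theorem as an immediate translation of Theorem~\ref{thm: Heden} via the amplitude--dimension dictionary of Theorem~\ref{thm:dim=amplitude}, identifying $d_1$, $d_2$, and $k$ with the tail dimension, second smallest dimension, and tail length of $\mathcal{V}_F$. Your remark about discarding the zero-dimensional spaces (bent components) is precisely the convention the paper uses, so no further argument is needed.
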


As mentioned in Corollary~\ref{cor:nb_partition}, the size of the set of non-trivial non-bent components $N_F$ of a quadratic APN function is exactly the size of the vector space partition $\mathcal{V}_F$. In particular, bounds on the size of vector space partitions yield immediate bounds for the size of $N_F$. The following two propositions, for instance, follow immediately from~\cite[Theorem 1]{heden2012extremal} via Theorem~\ref{thm:dim=amplitude}.

\begin{proposition}
\label{prop:LB1}
    Let $n$ be even, $F\colon \F_{2}^n \rightarrow \F_2^n$ be a quadratic APN function with linearity $2^{\frac{n+k}{2}}$ with $k \leq n/2$. Set $n=sk+r$ with integers $s,r$ and $0\leq r<k$. 
    If $k \mid n$, then 
    \[|N_F| \geq \frac{2^{n}-1}{2^k-1},\]
    and if $k \nmid n$, then
    \[|N_F| \geq 1+2^{\lceil \frac{k+r}{2}\rceil}+2^{k+r}\cdot \sum_{i=0}^{s-2}2^{ik}. \]
\end{proposition}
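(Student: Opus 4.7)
The plan is short: translate the hypothesis from the language of amplitudes into the language of vector space partitions, and then invoke the extremal bound of Heden quoted in the sentence immediately preceding the proposition.

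First, I would apply Theorem~\ref{thm:dim=amplitude} to convert the linearity hypothesis into a structural statement about the associated vector space partition $\mathcal{V}_F$ of $\F_2^n$. Since $L(F) = 2^{(n+k)/2}$, every $V_b \in \mathcal{V}_F$ satisfies $\dim(V_b) \leq k$, so $\mathcal{V}_F$ is a vector space partition of $\F_2^n$ whose largest part has dimension at most $k$. By Corollary~\ref{cor:nb_partition}, $|N_F| = |\mathcal{V}_F|$, so it suffices to lower-bound the number of parts in any such vector space partition.

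Second, I would apply Heden's bound~\cite[Theorem 1]{heden2012extremal}, which gives exactly this minimum. When $k \mid n$, a $k$-spread of $\F_2^n$ exists and attains the minimum of $\frac{2^n-1}{2^k-1}$ parts; any partition containing a subspace of strictly smaller dimension covers fewer than $2^k - 1$ nonzero vectors with that part and hence needs at least as many parts overall. When $k \nmid n$, no $k$-spread exists, and Heden's theorem states that with $n = sk + r$ and $0 < r < k$, any vector space partition of $\F_2^n$ whose parts have dimension at most $k$ has at least $1 + 2^{\lceil (k+r)/2 \rceil} + 2^{k+r} \sum_{i=0}^{s-2} 2^{ik}$ parts. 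Combined with the identity $|N_F| = |\mathcal{V}_F|$, this yields the claim.

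The entire argument is thus a transcription through the bridge Theorem~\ref{thm:dim=amplitude}; no further combinatorics on $F$ itself is needed. The only substantive obstacle is bookkeeping, namely confirming that the parameters in Heden's formulation line up with ours (in particular that the hypothesis $k \leq n/2$ places us in the regime where the cited bound applies, and that the relevant constraint really is on the largest, not the smallest, dimension appearing in the partition).
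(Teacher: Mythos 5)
Your proposal matches the paper's argument: the paper derives Proposition~\ref{prop:LB1} exactly by translating the linearity hypothesis through Theorem~\ref{thm:dim=amplitude} into a bound on the largest dimension in $\mathcal{V}_F$, identifying $|N_F|=|\mathcal{V}_F|$ via Corollary~\ref{cor:nb_partition}, and invoking \cite[Theorem 1]{heden2012extremal}. Your extra remarks (the elementary counting argument in the $k \mid n$ case and the check that $k \leq n/2$ puts $s \geq 2$ so Heden's formula applies) are correct bookkeeping consistent with the paper.
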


\begin{proposition}
    Let $n$ be even, $F\colon \F_{2}^n \rightarrow \F_2^n$ be a quadratic APN function  and assume all components have amplitude at least $2^{\frac{n+k}{2}}$. Set $n=sk+r$ with integers $s,r$ and $0\leq r<k$. 
Then
    \[|N_F| \leq 1+2^{k+r}\cdot \sum_{i=0}^{s-2}2^{ik}.\]
\end{proposition}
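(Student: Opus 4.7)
The plan is to translate the amplitude hypothesis into a dimension restriction on the vector space partition $\mathcal{V}_F$ associated with $F$, and then to invoke the extremal upper bound on vector space partitions from~\cite[Theorem 1]{heden2012extremal} (the same reference already cited for Proposition~\ref{prop:LB1}).

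First, by Theorem~\ref{thm:dim=amplitude}, each non-trivial component $F_b$ has amplitude $2^{(n+\dim V_b)/2}$, so the hypothesis that every component has amplitude at least $2^{(n+k)/2}$ translates directly into $\dim V_b \geq k$ for every $V_b \in \mathcal{V}_F$. Note that $k \geq 1$ is forced by $0 \leq r < k$, so in particular no component is bent and no $V_b$ equals $\{0\}$. Corollary~\ref{cor:nb_partition} further identifies $|N_F|$ with $|\mathcal{V}_F|$, so the task reduces to upper-bounding $|\mathcal{V}|$ over all vector space partitions $\mathcal{V}$ of $\F_2^n$ whose minimum subspace dimension is at least $k$.

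Second, I would apply~\cite[Theorem 1]{heden2012extremal}, which gives precisely such a bound: for any vector space partition $\mathcal{V}$ of $\F_q^n$ in which every subspace has dimension at least $k$, writing $n = sk+r$ with $0 \leq r < k$, one has
\[ |\mathcal{V}| \leq 1 + q^{k+r}\sum_{i=0}^{s-2} q^{ik}. \]
Specializing to $q=2$ and using $|N_F| = |\mathcal{V}_F|$ yields the claimed inequality. As a sanity check, when $k>n/2$ we have $s=1$ and the sum is empty, so the bound collapses to $|N_F| \leq 1$, in agreement with Theorem~\ref{thm:onebig}; and when $k \mid n$ with $r=0$, the bound simplifies to $(2^n-1)/(2^k-1)$, which is the classical spread bound.

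There is essentially no technical obstacle: the argument is purely a dictionary between amplitudes and subspace dimensions, with the cited combinatorial result doing all the real work. The only minor point worth noting is the consistency of discarding zero-dimensional pieces from $\mathcal{V}_F$ and excluding bent components from $N_F$; these conventions agree, so the counts on both sides of the inequality match up exactly.
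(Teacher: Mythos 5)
Your proposal is correct and follows essentially the same route as the paper, which likewise obtains this bound with no further argument by combining Theorem~\ref{thm:dim=amplitude} and Corollary~\ref{cor:nb_partition} with the extremal result in~\cite[Theorem 1]{heden2012extremal}. Your side remark that the hypothesis forces no bent components (and no zero-dimensional $V_b$) is not needed for the argument — the dictionary $|N_F|=|\mathcal{V}_F|$ and the condition $\dim V_b\geq k$ for all members of $\mathcal{V}_F$ suffice even if bent components are present under the intended reading of the hypothesis — but it does no harm.
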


\begin{proposition}
\label{prop:LB2}
    Let $n$ be even, $F\colon \F_{2}^n \rightarrow \F_2^n$ be a quadratic APN function  with linearity $2^{\frac{n+k}{2}}$, and assume $F$ has a component function with amplitude $2^{\frac{n+\ell}{2}}$ with $2\leq \ell<n-k$. Then $|N_F| \geq 2^k+2^\ell+1.$ 
\end{proposition}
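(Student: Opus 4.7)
The plan is to reduce the claim to a statement about vector space partitions of $\F_2^n$ via Theorem~\ref{thm:dim=amplitude} and then invoke a known bound due to Heden, exactly along the lines foreshadowed in the paragraph introducing the proposition.

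First, I would translate the hypotheses into partition language. By Theorem~\ref{thm:dim=amplitude}, the assumption $L(F) = 2^{\frac{n+k}{2}}$ means that $\mathcal{V}_F$ contains some subspace $V_1$ of dimension $k$ and no subspace of higher dimension, while the existence of a component of amplitude $2^{\frac{n+\ell}{2}}$ yields some $V_2 \in \mathcal{V}_F$ of dimension $\ell$. Because $k$ is maximal in $\mathcal{V}_F$ and $\ell \neq k$, we must have $\ell < k$, and the hypothesis $\ell < n-k$ becomes $\dim V_1 + \dim V_2 < n$. By Corollary~\ref{cor:nb_partition}, $|N_F| = |\mathcal{V}_F|$, so it suffices to prove the corresponding lower bound on the number of parts of $\mathcal{V}_F$.

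Second, I would apply~\cite[Theorem~1]{heden2012extremal} to the partition $\mathcal{V}_F$ with the two distinguished subspaces $V_1$ and $V_2$. That theorem provides precisely the lower bound $2^k + 2^\ell + 1$ on the number of parts in any vector space partition of $\F_2^n$ containing two subspaces of distinct dimensions $k > \ell$ with $k + \ell < n$. Combined with the translation above, this yields $|N_F| = |\mathcal{V}_F| \geq 2^k + 2^\ell + 1$, completing the proof.

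The main obstacle is outsourced: it is the proof of Heden's theorem itself. The geometric intuition for why the bound is sharp is that the extremal partition is a \emph{near-spread} of $\F_2^n$ around $V_1$: one takes $V_1$ together with $2^k - 1$ subspaces of dimension $n - k$ each complementary to $V_1$ (one short of a full spread), and then one is forced to partition the unique missing $(n-k)$-dimensional subspace; this piece must contain $V_2$, and the remaining $2^{n-k} - 2^\ell$ nonzero vectors inside it must be split into subspaces of dimension at most $n-k-\ell$ (since they have to be disjoint from $V_2$ inside this $(n-k)$-dimensional part), requiring at least $2^\ell$ further subspaces. The counts sum to $1 + (2^k-1) + 1 + 2^\ell = 2^k + 2^\ell + 1$, matching the bound. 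Showing that no partition can improve on this, not merely that one such partition exists, is the content of Heden's theorem, which we use as a black box.
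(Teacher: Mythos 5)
Your proposal is correct and takes essentially the same route as the paper: translate the hypotheses into the vector space partition $\mathcal{V}_F$ via Theorem~\ref{thm:dim=amplitude} and Corollary~\ref{cor:nb_partition}, then invoke as a black box the known lower bound $2^k+2^\ell+1$ for partitions of $\F_2^n$ containing two subspaces of distinct dimensions $k>\ell$ with $k+\ell<n$. The only slip is the citation: that black box is \cite[Proposition 7]{lehmann2012some} (exactly what the paper's one-line proof uses), not \cite[Theorem 1]{heden2012extremal}, which only bounds partition sizes in terms of a single extremal dimension as in Proposition~\ref{prop:LB1}.
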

\begin{proof}
    This follows from~\cite[Proposition 7]{lehmann2012some} together with Theorem~\ref{thm:dim=amplitude}.
\end{proof}
% \begin{remark}
%     As a result of Theorem~\ref{thm:dim=amplitude}, $d_1$ and $d_2$ refer to the particular $k$ of a component $F_b$'s amplitude. Hence, Heden's bounds on vector space partitions work to exclude amplitude distributions outside of these bounds.
% \end{remark}

\subsection{Explicit constructions of vector space partitions}
In this section, we review some explicit constructions of vector space partitions. We only cover some of the most basic examples. We invite the reader to refer to, for instance~\cite{heden2012survey,lehmann2012some}, for some explicit constructions of vector space partitions.

The most famous vector space partitions are partitions into subspaces of the same dimension $t$, in finite geometry these partitions correspond exactly to $t$-spreads which are well known and studied (see e.g.~\cite{dembowski1968finite}).

\begin{construction} \label{constr:spread}
    A vector space partition of $\F_2^n$ that consists of vector spaces with the same dimension $t$ exists if and only if $t \mid n$. Such a partition is of type $[t^{\frac{2^n-1}{2^t-1}}]$ and is called a $t$-spread.
\end{construction}
For instance, the vector space partition associated with APN functions with the classical Walsh transform is exactly a $2$-spread, which exists for all even $n$. 

The next construction is due to Bu~\cite[Lemma 4]{bu1980partitions}.

\begin{construction} \label{constr:bu}
Identify $\F_2^n$ with $ \F_{2^{n-s}} \times \F_{2^{s}}$ where $s \mid (n-s)$.
Let $U_\infty=\F_{2^{n-s}} \times \{0\} \leq \F_{2^{n-s}} \times \F_{2^{s}}$, and define $U_\alpha=\{(\alpha x,x) : x \in \F_{2^s}\}$ for all $\alpha \in \F_{2^{n-s}}$. Then $\mathcal{V}=\{U_\infty\} \cup\{U_\alpha \colon \alpha \in \F_{2^s}\}$ is a vector space partition of type $[s^{2^{n-s}},(n-s)^{1}]$.
\end{construction}

Note that for $s=2$ this partition type corresponds precisely to quadratic APN functions with maximum linearity $2^{n-1}$, see Example~\ref{ex:maxlinearity}.

In~\cite[Example 6]{kurzconstructions}, Kurz provides a construction of the same type of vector space partition under the relaxed condition that $n=2s+r$ with $s \geq 2$ and $r\geq 1$. We omit the details and refer to~\cite[Example 6]{kurzconstructions} for the intricacies of the construction.
\begin{construction} \label{constr:kurz}
    Let $n=2s+r$ with $s\geq 2$ and $r\geq 1$. Then there exists a vector space partition of $\F_2^n$ of type $[s^{2^{n-s}},(n-s)^1]$.
\end{construction}

% Basically, choose integers $k \geq 2$, $r \geq 1$ such that $2k+r=10$. Then let $n=k+r$ and consider a matrix representation $M : \F_{2^n}  \to \F_{2}^{n×n}$ of $\F_{2^n}/\F_2$, obtained by expressing the multiplication maps $\mu_\alpha : \F_{2^n} \to \F_{2^n} , x \mapsto \alpha x$, which are linear over $\F_2$, in terms of a fixed basis of $\F_{2^n}/\F_2$. Then, all matrices in $M(\F_{2^n})$ are invertible and have mutual rank distance $d_R(A, B) := rk(A - B) = n$. In other words, the matrices of $M(\F_{2^n} )$ form a maximum rank distance code with minimum rank distance $n$ and cardinality $q^n$.

% Now let $\mathcal{B} \subseteq \F_2^{k \times n}$ be the matrix code obtained from $M(\F_{2^n})$ by deleting the last $n-k$ rows of every matrix. Then $\mathcal B$ has cardinality minimum rank distance $k$. Hence, by applying the lifting construction $B \mapsto (I_k | B)$ where $I_k$ is the $(k \times k)$ identity matrix, to $\mathcal{B}$ we obtain a partial $k$-spread $\mathcal N$ in $\F_2^{10}$ of size $2^n=2^{k+r}$. Since precisely the points outside the $(k + r)$-subspace $S=\{x \in \F_2^{10}: x_1=x_2=\cdots=x_k=0\}$ are covered, $\mathcal P = \mathcal N \cup \{S\}$ is a vector space partition of $\F_2^{10}$ of type $[k^{\#\mathcal N}(k+r)^1]$ and $\mathcal N$ is $2^r$-divisible with cardinality $2^{k+r}$.

\subsection{Analysis for small dimensions}
We now consider vector space partition types of $\F_2^n$ for small even values of $n$. Via Theorem~\ref{thm:dim=amplitude}, these partition types correspond to possible amplitude distributions of quadratic APN functions. With Corollary~\ref{cor:even} in mind, we only consider vector space partitions where every subspace has even dimension.
\subsubsection{Dimension 6} \label{s:dim6}
There are only two possible vector space partition types: By the dimension bound (Theorem~\ref{thm:conditions}), if there is a $V \in \mathcal{V}$ with $\dim(V)=4$, then all other subspaces in $\mathcal{V}$ have dimension $2$. Hence, by the packing condition, the only two possible partition types are $[2^{21}]$ and $[2^{16},4^1]$. By Constructions~\ref{constr:spread} and~\ref{constr:bu}, both of these partition types also exist. Quadratic APN functions over $\F_2^6$ are completely classified~\cite[Table 4]{pott2016almost}, and there exist APN functions with amplitude distributions $[0^{42},2^{21}]$ and $[0^{46},2^{16},4^1]$ corresponding to the two partition types. So in dimension 6, every admissible partition type is realized by a quadratic APN function with corresponding amplitude distribution.

\begin{remark}
    Just from the classical result on the fourth moment of the Walsh transform (Proposition~\ref{prop:fourth}), other amplitude distributions seem possible, e.g. $[0^{50},2^{11},4^2]$, which however contradict the new result in Theorem~\ref{thm:onebig}. Our results in this paper thus illuminate why there are only two different Walsh spectra among quadratic APN functions on $\F_2^6$.
\end{remark}

\subsubsection{Dimension 8} \label{s:dim8}
The complete classification of vector space partitions types in $\F_2^8$ (excluding "degenerate" partitions containing one-dimensional spaces) was achieved in~\cite{el2010partitions} with the help of a computer. Since we are only interested in vector space partitions where every space in the partition has even dimension, we can give a computer-free proof for this case:

\begin{theorem} \label{thm:dim8}
    Let $\mathcal{V}$ be a vector space partition of $\F_2^8$ such that $\dim(V)$ is even for all $V \in \mathcal{V}$. Then $\mathcal{V}$ has one of the following types:
    \begin{itemize}
        \item $[8^1]$,
        \item $[2^{64},6^1]$,
        \item $[2^{5i},4^{17-i}]$ for $0 \leq i \leq 17$.
    \end{itemize}
    There exist vector space partitions of $\F_2^8$ for all these types.
\end{theorem}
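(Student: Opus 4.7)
The plan is to separate the argument into the classification of admissible types and the construction of a partition for each admissible type. Write $n_i$ for the number of subspaces of dimension $i$ in $\mathcal{V}$; by hypothesis only $n_2, n_4, n_6, n_8$ can be nonzero. Counting non-zero vectors yields the packing identity $3n_2+15n_4+63n_6+255n_8=255$, and since distinct subspaces of $\mathcal{V}$ meet only in $\{0\}$, the dimension formula forces $\dim V+\dim W\le 8$ for any two subspaces $V,W\in\mathcal{V}$.

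For the classification, I will split on the largest dimension that appears. If $n_8=1$, then all other $n_i$ vanish and we obtain $[8^1]$. If $n_8=0$ and $n_6\ge 1$, then $n_6=1$ (two $6$-dimensional subspaces would violate the dimension bound) and $n_4=0$ (since $4+6>8$), so the packing identity forces $n_2=64$, yielding $[2^{64},6^1]$. Finally, if $n_8=n_6=0$, the packing identity simplifies to $n_2+5n_4=85$, so $(n_2,n_4)=(5i,17-i)$ for some $0\le i\le 17$; no further constraint appears, as any pair of subspaces of dimension at most $4$ automatically satisfies the dimension bound.

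For existence, the type $[8^1]$ is given by the trivial partition $\{\F_2^8\}$, and $[2^{64},6^1]$ is produced by Construction~\ref{constr:bu} with $s=2$. The extremal mixed-dimension types $[4^{17}]$ and $[2^{85}]$ are realized by the $4$-spread and $2$-spread of $\F_2^8$ guaranteed by Construction~\ref{constr:spread}. To obtain $[2^{5i},4^{17-i}]$ for $1\le i\le 16$, I will take a $4$-spread $\{W_1,\dots,W_{17}\}$ of $\F_2^8$ and, applying Construction~\ref{constr:spread} inside each $W_j$ (a $2$-spread of $\F_2^4$ has exactly $5$ members), replace $W_1,\dots,W_i$ by the union of their $2$-spreads; the result is a vector space partition of the desired type.

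The main step requiring care is not computational but combinatorial: it is the observation that for $n=8$ the dimension bound is strong enough to collapse every case involving a $6$- or $8$-dimensional piece into a single type, after which the remaining freedom is captured entirely by the single parameter $i$ in $(n_2,n_4)=(5i,17-i)$. Once this is in hand, the existence side is handled uniformly by the spread and refinement machinery from Section~\ref{s:partitions}.
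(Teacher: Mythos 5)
Your proposal is correct and follows essentially the same route as the paper: the packing identity plus the dimension bound collapse the cases with a $6$- or $8$-dimensional subspace, and existence comes from Constructions~\ref{constr:spread} and~\ref{constr:bu} together with refining $4$-dimensional subspaces of a $4$-spread into $2$-spreads. The only cosmetic difference is your explicit $n_i$ bookkeeping and listing $[2^{85}]$ separately, which the paper obtains as the $i=17$ case of the same refinement.
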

\begin{proof}
    The case $[8^1]$ is trivial. The dimension bound along with the packing condition (see Theorem~\ref{thm:conditions}) imply that if there is a $V \in \mathcal{V}$ with $\dim(V) = 6$, then all other subspaces of $\mathcal{V}$ must have dimension 2 and the packing conditions shows that $[2^{64}, 6^1]$ is the only possibility.  The existence of such a partition is guaranteed by Construction~\ref{constr:bu}. The packing condition further implies that if $\dim(V) \leq 4$ for all $v \in \mathcal{V}$ then the type of $\mathcal{V}$ is necessarily $[2^{5i}, 4^{17-i}]$ for some $\ 0\leq i \leq 17.$ The partition of type $[4^{17}]$ is exactly a spread and exists by Construction~\ref{constr:spread}. Each vector space of dimension $4$ can further be partitioned into a $2$-spread, i.e., 5 subspaces of dimension 2 each. This shows that vector space partitions of types $[2^{5i},4^{17-i}]$ exist for all $0 \leq i \leq 17$.
\end{proof}

\subsubsection{Dimension 10} \label{s:dim10}
We now consider vector space partition of $\F_2^{10}$ in which all subspaces have even dimension. There is some intersection with previous work, e.g. in~\cite{seelinger}, the authors consider subspace partitions of the type $[2^as^b]$. We give a self-contained proof for all cases that are relevant to our question.
\begin{theorem} \label{thm:dim10}
    Let $\mathcal V$ be a vector space partition of $\F_2^{10}$ such that $\dim(V)$ is even for all $V \in \mathcal{V}$. Then $\mathcal V$ has one of the following types:
    \begin{itemize}
        \item $[10^1]$,
        \item $[2^{256},8^1]$,
        \item $[2^{320-5i},4^{i},6^1]$ for $0\le i\le 64$,
        \item $[2^{341-5i},4^i]$ for $0\le i\le 65.$
    \end{itemize}
    There exist examples of all these vector space partition types.
\end{theorem}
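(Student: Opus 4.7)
The plan is to mirror the argument of Theorem~\ref{thm:dim8}: first enumerate all partition types compatible with the packing condition and dimension bounds of Theorem~\ref{thm:conditions}, then prune further candidates via Heden's tail estimates (Theorem~\ref{thm: Heden}), and finally supply an explicit construction for each surviving type. Writing $k_i$ for the number of $i$-dimensional subspaces in $\mathcal{V}$, the packing equation $3k_2+15k_4+63k_6+255k_8+1023k_{10}=1023$ together with the dimension bounds naturally splits into four cases by the largest dimension appearing. A $10$-dimensional subspace forces $\mathcal{V}=\{\F_2^{10}\}$; an $8$-dimensional one forces every other subspace to be $2$-dimensional and pins down $k_2=256$; a $6$-dimensional one restricts every other subspace to dimension $\le 4$ and gives $5k_4+k_2=320$, yielding $[2^{320-5i},4^i,6^1]$ for $0\le i\le 64$; and if all subspaces have dimension $\le 4$, then $5k_4+k_2=341$, giving a priori $[2^{341-5i},4^i]$ for $0\le i\le 68$.

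To prune the last family, I would apply Theorem~\ref{thm: Heden} with $d_1=2$ and $d_2=4$, so $2^{d_2-d_1}=4$: when $4\nmid k_2$, case~(ii) restricts $k_2$ to $5$ or $k_2>8$, and when $4\mid k_2$, cases~(iii)/(iv) force $k_2\ge 16$. The values $i=67$ and $i=68$ produce $k_2\in\{6,1\}$, both forbidden, while a direct check verifies every $0\le i\le 66$ passes. The family $[2^{320-5i},4^i,6^1]$ loses no further candidates under the same checks (for the boundary case $i=64$ one uses $d_1=4,\,d_2=6$ and case~(iii), which gives $k_4\ge 52$, satisfied by $k_4=64$).

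For existence, $[10^1]$ is trivial and $[2^{256},8^1]$ follows from Construction~\ref{constr:bu} with $s=2$. For $[2^{320-5i},4^i,6^1]$, I would start from $[4^{64},6^1]$, given by Construction~\ref{constr:kurz} with $n=10,\,s=4,\,r=2$, and refine any $64-i$ of the $4$-dimensional components into their internal $2$-spreads of size $5$ (possible since $2\mid 4$). To realise $[2^{341-5i},4^i]$ for $0\le i\le 65$, I would use the Desarguesian $2$-spread $S$ of $\F_2^{10}$: under the identification $\F_2^{10}\cong\F_4^5$, the elements of $S$ are precisely the points of $\PG(4,4)$, and a $4$-dimensional $\F_2$-subspace that decomposes into five spread elements is exactly a line of $\PG(4,4)$. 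A partial line spread of $\PG(4,4)$ of size $i$ (which exists for every $0\le i\le 65$, the maximum size being $4^3+1=65$) then lets us merge $5i$ elements of $S$ into $i$ pairwise trivially-intersecting $4$-dimensional $\F_2$-subspaces, producing the required partition.

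The main obstacle is the single case $i=66$, i.e.\ the type $[2^{11},4^{66}]$, whose realisation is equivalent to the existence of a partial $4$-spread of size $66$ in $\F_2^{10}$. This is a delicate and, to my knowledge, unresolved problem in finite geometry, and is precisely the reason for the ``except possibly'' qualifier in the statement.
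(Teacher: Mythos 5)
Your proposal is correct, and its classification half is essentially the paper's own argument: the packing condition and dimension bounds of Theorem~\ref{thm:conditions} produce exactly the four candidate families, and Heden's tail theorem (Theorem~\ref{thm: Heden}(ii), applied with $d_1=2$, $d_2=4$, tail length $k_2\in\{6,1\}$) kills $i=67,68$, precisely as in the paper; your extra tail checks for the remaining values of $i$ and for the $[2^{320-5i},4^i,6^1]$ family are harmless but not needed, since existence is then shown directly. The existence arguments also coincide for $[10^1]$, $[2^{256},8^1]$ (Construction~\ref{constr:bu}) and $[2^{320-5i},4^i,6^1]$ (Construction~\ref{constr:kurz} with $s=4$, $r=2$, then refining $4$-spaces into $2$-spreads). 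The one genuinely different step is the family $[2^{341-5i},4^i]$, $0\le i\le 65$: the paper stays self-contained, splitting the $6$-space of a $[4^{64},6^1]$ partition via Construction~\ref{constr:bu} into $[2^{16},4^{65}]$ and then refining, whereas you pass to the Desarguesian $2$-spread $\F_2^{10}\cong\F_4^5$ and invoke the classical fact (Beutelspacher) that $\PG(4,4)$ admits partial line spreads of every size up to $4^3+1=65$. That route is valid, but it imports an external finite-geometry result where the paper needs only its own constructions, so you should cite it explicitly. One small caveat: your closing remark that realizing $[2^{11},4^{66}]$ is \emph{equivalent} to the existence of a partial $4$-spread of size $66$ in $\F_2^{10}$ is only an implication in one direction; a partial $4$-spread of size $66$ leaves $33$ uncovered nonzero vectors, which would additionally have to be partitionable into $11$ planes. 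This does not affect the theorem, since that type is left open in the statement anyway.
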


\begin{proof}
    The $[10^1]$ case is again trivial. If there is a $V \in \mathcal{V}$ with $\dim(V)=8$ then (by the dimension bound and packing condition) the vector space partition is necessarily of type $[2^{256},8^1]$, and Construction~\ref{constr:bu} gives again an example. 
    
    Assume now that there is vector space $V \in \mathcal{V}$ with $\dim(V)=6$. By the dimension bound, there cannot be another vector space in $\mathcal{V}$ with dimension $\geq 6$. A simple calculation shows that the packing condition only allows vector space partitions of the form $[2^{320-5i},4^{i},6^1]$ for $0\le i\le 64$. We now show that all these partitions in fact occur. A partition of type $[4^{64},6^1]$ can be constructed using Construction~\ref{constr:kurz} with $s=4$, $r=2$. 
    Each 4-dimensional subspace can itself be partitioned into a 2-spread, we therefore obtain examples of vector space partitions of type $[2^{5j},4^{64-j}, 6^1]$ where $0 \leq j \leq 64$, or, equivalently, of type $[2^{320-5i},4^{i}, 6^1]$ where $0 \leq i \leq 64$. 
    
    We now consider vector space partitions where all subspaces have dimension 2 or 4. By the packing condition, the possible vector space partition types are $[2^{341-5i},4^{i}]$ for $0\le i\le 68.$ However, by the tail condition in Theorem~\ref{thm: Heden} (ii), the partition types for $i\in \{67,68\}$ do not occur. Similarly, the partition type for $i=66$ cannot occur by~\cite[Lemma 16]{nuastase2026second}. 
    
    We now prove the existence of vector space partitions of type $[2^{341-5i},4^{i}]$ for $0\le i\le 65.$ Consider a vector space partition of type $[4^{64},6^1]$. Using Construction~\ref{constr:bu}, we can partition the $6$-dimensional space in this partition further into one $4$-dimensional space and 16 spaces of dimension 2. We arrive at a partition of type $[2^{16},4^{65}]$. We can now again partition each $4$-dimensional space in this partition into a $2$-spread, i.e., 5 subspaces of dimension $2$. We thus obtain examples of vector space partitions of type $[2^{16 +5j},4^{65-j}]$ for $0 \leq j \leq 65$, which is the same as  $[2^{341-5i},4^{i}]$ for $0\le i\le 65.$
\end{proof}

Theorem~\ref{thm:dim10} gives a complete answer for the relevant vector spaces for amplitude distributions of quadratic APN functions in dimension 10. 

\section{Blocking sets associated to quadratic functions}
\label{s:blocking}
In this section, we study the set of non-trivial non-bent components of a quadratic function $F\colon \F_2^n \rightarrow \F_2^m$ further. Recall that we denote this set by $N_F$, i.e., $N_F=\{b \in \F_2^m\setminus \{0\} \colon F_b \text{ is not bent}\}$.
\subsection{General case}
We first consider the non-bent components of a general quadratic function $F \colon \mathbb{F}_2^n \rightarrow \mathbb{F}_2^m$. 
\begin{proposition} \label{prop:odd}
    Let $n$ be even, and let $F \colon \mathbb{F}_2^n \rightarrow \mathbb{F}_2^m$ be quadratic. If $W\leq \F_2^m$ is a vector space of dimension at least $\frac{n}{2}+1$, then $|W\cap N_F |$ is odd. 
\end{proposition}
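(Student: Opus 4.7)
My plan is to recognize $\{b \in \F_2^m : F_b \text{ is not bent}\}$ as the zero set of an explicit low-degree polynomial in $b$, and then apply a standard parity argument for polynomial sums over an $\F_2$-subspace.

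First, since replacing $F$ by $F + F(0)$ does not change any of the functions $F_b(x+y)+F_b(x)+F_b(y)$, it does not affect which components are bent; thus I may assume $F(0) = 0$. For every $b \in \F_2^m$ the symmetric bilinear form
\[
B_b(x,y) \;=\; F_b(x+y) + F_b(x) + F_b(y) \;=\; \langle b,\, F(x+y)+F(x)+F(y)\rangle
\]
on $\F_2^n$ is then alternating (diagonal entries vanish), and $F_b$ is bent exactly when $B_b$ is non-degenerate. Let $M_b$ denote the associated $n \times n$ alternating matrix over $\F_2$. The entries of $M_b$ depend $\F_2$-linearly on $b$, and the Pfaffian of a $2k \times 2k$ alternating matrix is a polynomial of degree $k$ in its entries. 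Hence
\[
P(b) \;:=\; \mathrm{Pf}(M_b)
\]
is a polynomial in $b_1,\dots,b_m$ of total degree at most $n/2$. Over any field an alternating matrix is non-degenerate iff its Pfaffian is nonzero, so
\[
\{b \in \F_2^m : P(b) = 0\} \;=\; \{b \in \F_2^m : F_b \text{ is not bent}\} \;=\; N_F \cup \{0\},
\]
where the second equality uses $M_0 = 0$ and hence $P(0) = 0$.

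Next I would invoke the following standard parity lemma: if $P \in \F_2[y_1,\dots,y_m]$ has degree at most $d$ and $U \leq \F_2^m$ is a subspace with $\dim U > d$, then $\sum_{b \in U} P(b) = 0$ in $\F_2$. (After a linear change of coordinates identifying $U$ with a coordinate subspace, each monomial of $P$ involves fewer than $\dim U$ variables, so summing it over $U$ contributes $0 \bmod 2$.) Applying this to the polynomial $P$ above, with $d = n/2 < n/2 + 1 \leq \dim W$, gives $\sum_{b \in W} P(b) = 0$, i.e.\ $|\{b \in W : P(b) = 1\}|$ is even. Since $|W|$ is even, $|\{b \in W : P(b) = 0\}| = |W \cap (N_F \cup \{0\})|$ is also even, and as $0 \in W$ this forces $|W \cap N_F|$ to be odd.

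The main obstacle is the first step: cleanly identifying the non-bent set of a quadratic function with the zero locus of a degree $n/2$ polynomial via the Pfaffian, and in particular being careful about the reduction to the alternating case (the shift by $F(0)$) and about the fact that $\det(M_b) = \mathrm{Pf}(M_b)^2$ holds as a polynomial identity also in characteristic $2$. Once this polynomial-theoretic description is in place, the subspace parity argument is routine.
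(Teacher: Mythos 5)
Your proof is correct, but it takes a genuinely different route from the paper. The paper argues via character sums: it evaluates $\sum_{b\in W}W_F(b,0)=|W|\cdot|\{x: F(x)\in W^\perp\}|$, which is divisible by $2^{n/2+1}$, and then observes that the trivial component contributes $2^n$, each bent component contributes $\pm 2^{n/2}$, and every non-bent component contributes a multiple of $2^{n/2+1}$; the divisibility forces the number of bent components in $W$ to be even, hence $|W\cap N_F|$ odd. You instead encode non-bentness algebraically: $F_b$ is bent iff the polar alternating form $B_b$ is non-degenerate, so $N_F\cup\{0\}$ is the zero locus of $P(b)=\mathrm{Pf}(M_b)$, a polynomial of degree at most $n/2$ in $b$, and the parity follows from the standard fact that a degree-$\le d$ Boolean function sums to $0$ over any subspace of dimension $>d$. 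Both arguments are sound; what they buy differs. Your Pfaffian argument gives a stronger structural fact (a degree-$n/2$ polynomial description of the non-bent locus, which yields the parity statement for arbitrary affine subspaces of dimension $\ge n/2+1$, not only linear ones), but it genuinely uses quadraticity, whereas the paper's character-sum proof only uses $|W_F(b,0)|\in\{0,2^{(n+k)/2}\}$ and therefore extends verbatim to plateaued functions, as the paper notes in its closing remark of Section 3. One small slip to fix: your justification for assuming $F(0)=0$ is off, since $F_b(x+y)+F_b(x)+F_b(y)$ \emph{does} change, by the constant $\langle b,F(0)\rangle$, when a constant is added to $F$ (three copies of the constant survive in characteristic $2$). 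The reduction is still valid for the simpler reason that adding a constant to $F$ does not change any $|W_F(b,a)|$ and hence not bentness; alternatively, define the polar form with the extra term $F_b(0)$, i.e.\ $B_b(x,y)=F_b(x+y)+F_b(x)+F_b(y)+F_b(0)$, which is constant-invariant and alternating without any normalization.
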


\begin{proof}
    By orthogonality of characters, \begin{equation*}
    \begin{split}
        \sum_{b\in W} W_F(b,0)&=\sum_{x\in\mathbb{F}_2^n}\sum_{b\in W} (-1)^{\langle b, F(x)\rangle} \\&= |W|\cdot\left|\{x:F(x)\in W^{\perp}\}\right|.
    \end{split}
\end{equation*} Hence, the left-hand side is divisible by $|W|$, and in particular by $2^{\frac{n}{2}+1}$. 

Now let 
\[P=|\{b \in W \colon W_F(b,0)=2^\frac{n}{2}\}|, \qquad B=|\{b \in W \colon |W_F(b,0)|=2^\frac{n}{2}\}|.\]
Then, 
\begin{align*}
    \sum_{b\in W} W_F(b,0)&=2^n+2^\frac{n}{2}\cdot P-2^\frac{n}{2}\cdot (B-P) + 2^{\frac{n}{2}+1}\cdot C\\&= 2^n+2^\frac{n}{2}\cdot (2P-B)+2^{\frac{n}{2}+1}\cdot C
\end{align*}
for some integer $C$. Now since the left-hand side is divisible by $2^{\frac{n}{2}+1}$, we have that $B$ is necessarily even. This means the number of non-trivial non-bent components in $W$ is odd.
\end{proof}
In the language of finite geometry, this means that $N_F$ is a certain kind of blocking set. We recall the terminology.
\begin{definition}
    Let $\PG(m,q)$ be the projective space of dimension $m$ over the finite field $\F_q$. A subset $B\subseteq \PG(m,q)$ is called a blocking set with respect to $k$-spaces, if $B$ intersects every $k$-space in at least one element.
\end{definition}
Recall that we can identify $\PG(m-1,2)$ naturally with $\F_2^m\setminus\{0\}$. Under this correspondence, a $k$-space in $\PG(m-1,2)$ is exactly $V\setminus \{0\}$, for some $(k+1)$-dimensional subspace $V$ of $\F_2^m$. Under this correspondence, Proposition~\ref{prop:odd} immediately yields the following:
\begin{corollary} \label{cor:blockingset}
    Let $n$ be even and $F \colon \mathbb{F}_2^n \rightarrow \mathbb{F}_2^m$ be quadratic. Then $N_F$ is a blocking set in $\PG(m-1,2)$ with respect to $(n/2)$-spaces, moreover $|W \cap N_F|$ is odd for each $(n/2)$-space $W$.
\end{corollary}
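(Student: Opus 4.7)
The plan is to observe that this statement is essentially a direct translation of Proposition~\ref{prop:odd} into the language of projective geometry, so there is very little additional work to do. The main ingredients are already in place: Proposition~\ref{prop:odd} controls intersections of $N_F$ with sufficiently large linear subspaces of $\F_2^m$, and the standard bijection between $\PG(m-1,2)$ and $\F_2^m \setminus \{0\}$ converts linear subspaces into projective subspaces.

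First I would spell out the correspondence: a $k$-space in $\PG(m-1,2)$ is, under the identification of the projective points with nonzero vectors of $\F_2^m$, exactly the set $V \setminus \{0\}$ for some $(k+1)$-dimensional linear subspace $V \leq \F_2^m$. Hence an $(n/2)$-space $W \subseteq \PG(m-1,2)$ corresponds to a linear subspace $\widetilde{W} \leq \F_2^m$ of dimension $\frac{n}{2}+1$, and moreover $|W \cap N_F| = |\widetilde W \cap N_F|$ since $0 \notin N_F$.

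Next I would apply Proposition~\ref{prop:odd} to the subspace $\widetilde W$: its hypothesis $\dim(\widetilde W) \geq \frac{n}{2}+1$ is exactly met, so $|\widetilde W \cap N_F|$ is odd, and therefore $|W \cap N_F|$ is odd. In particular this cardinality is nonzero, meaning $N_F$ meets every $(n/2)$-space of $\PG(m-1,2)$, which is exactly the definition of a blocking set with respect to $(n/2)$-spaces.

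There is no real obstacle in this argument; the only thing to be careful about is the off-by-one between projective dimension and linear dimension, and the observation that odd cardinality in particular implies non-empty intersection, which is what upgrades Proposition~\ref{prop:odd} from a parity statement to a blocking-set statement.
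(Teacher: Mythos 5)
Your proposal is correct and matches the paper's argument: the paper likewise treats the corollary as an immediate consequence of Proposition~\ref{prop:odd} via the identification of $(n/2)$-spaces in $\PG(m-1,2)$ with $\left(\tfrac{n}{2}+1\right)$-dimensional linear subspaces of $\F_2^m$, with oddness of the intersection giving non-emptiness and hence the blocking-set property. No gaps; your handling of the dimension off-by-one is exactly the point the paper relies on.
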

Note that it was already observed in~\cite{pott2017maximum} that $N_F$ is a blocking set in $\PG(m-1,2)$ with respect to $(n/2)$-spaces (this is indeed true even for non-quadratic functions). However, the additional information that the intersections have to be odd offers a new crucial insight that leads to sharper results as we will see in this section. Let us first recall some basic facts on blocking sets and their consequences for the number of bent component functions of quadratic functions.

\begin{theorem}\cite{bose1966characterization} \label{thm:boseburton}
    Let $B$ be a blocking set with respect to $k$-spaces in $\PG(m,2)$. Then $|B|\geq 2^{m-k+1}-1$, and equality holds if and only if $B$ is an $(m-k)$-space.
\end{theorem}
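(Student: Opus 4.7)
The plan is to identify $\PG(m,2)$ with $\F_2^{m+1}\setminus\{0\}$, under which a $k$-space corresponds to a $(k+1)$-dimensional linear subspace, and then to proceed by induction on $m$ using projection from a point outside $B$. The base cases are straightforward: when $k=0$ the set $B$ must contain every point, so $|B|=2^{m+1}-1$ with $B=\PG(m,2)$; when $k=m$ the only $k$-space is the whole projective space, so any nonempty $B$ works, and equality $|B|=1$ corresponds to $B$ being a single point, i.e., a $0$-space.

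For $0<k<m$, I will assume $B\neq\PG(m,2)$ (otherwise $|B|=2^{m+1}-1>2^{m-k+1}-1$ strictly), pick any $p\notin B$, and consider the projection $\pi_p\colon\PG(m,2)\setminus\{p\}\to\PG(m-1,2)$ sending $q$ to $q+\langle p\rangle$. The key claim is that $\bar B:=\pi_p(B)$ is a blocking set for $(k-1)$-spaces in $\PG(m-1,2)$: any $(k-1)$-space of $\PG(m-1,2)$ lifts to a $k$-space $K$ of $\PG(m,2)$ passing through $p$, which must meet $B$ by hypothesis; since $p\notin B$ this intersection point lies in $K\setminus\{p\}$ and hence projects into $\bar B$. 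By the inductive hypothesis applied in $\PG(m-1,2)$, $|\bar B|\geq 2^{(m-1)-(k-1)+1}-1=2^{m-k+1}-1$, and the trivial bound $|B|\geq|\bar B|$ completes the inequality.

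For the equality case, $|B|=2^{m-k+1}-1$ forces $|\bar B|=|B|$, so $\pi_p|_B$ must be injective. Since over $\F_2$ the map $\pi_p$ identifies two distinct points exactly when their sum equals $p$, injectivity on $B$ means that no two distinct elements of $B$ sum to $p$. Running this argument for every choice of $p\notin B$ yields that for any two distinct $q_1,q_2\in B$ the sum $q_1+q_2$ cannot lie outside $B$, and so (being nonzero) must itself lie in $B$. Thus $B\cup\{0\}$ is an $\F_2$-linear subspace of size $2^{m-k+1}$, which means $B$ is an $(m-k)$-space of $\PG(m,2)$. The converse implication, that any $(m-k)$-space blocks every $k$-space, is immediate from the dimension formula, since $(m-k+1)+(k+1)>m+1$. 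The main subtlety I expect will require care is the equality step: one must exploit the freedom of choosing $p$ anywhere in the complement of $B$ to deduce closure of $B\cup\{0\}$ under addition, rather than just a property relative to a single projection.
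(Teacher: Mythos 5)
Your proof is correct. Note that the paper does not prove this statement at all: it is the classical Bose--Burton theorem, quoted with a citation to Bose and Burton (1966), so there is no in-paper argument to compare against. Your self-contained proof by induction via projection from a point $p\notin B$ is a standard and sound way to establish it: the base cases $k=0$ and $k=m$ are handled correctly, the projected set $\pi_p(B)$ is indeed a blocking set with respect to $(k-1)$-spaces of $\PG(m-1,2)$ because every $(k-1)$-space lifts to a $k$-space through $p$, and the inequality follows from $|B|\geq|\pi_p(B)|$ together with the inductive bound. The equality analysis is the delicate part and you treat it properly: over $\F_2$, injectivity of $\pi_p$ on $B$ says exactly that no two distinct elements of $B$ sum to $p$, and since this holds for \emph{every} $p\notin B$, any sum $q_1+q_2$ of distinct elements of $B$ (which is nonzero) must lie in $B$, so $B\cup\{0\}$ is closed under addition and hence is a linear subspace of dimension $m-k+1$, i.e.\ an $(m-k)$-space; the converse direction via the dimension formula is immediate. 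One could remark that your induction only ever uses the bound part of the inductive hypothesis, not the equality characterization, which makes the argument even cleaner than it first appears.
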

As observed in~\cite{pott2017maximum,zheng2020constructions}, this result implies that a function $F \colon \F_2^n\rightarrow \F_2^m$ has at most $2^m-2^{m-n/2}$ bent components; equivalently, $|N_F| \geq 2^{m-n/2}-1$. Functions satisfying this bound with equality are called \emph{MNBC functions} (for \emph{maximum number of bent components}). 

Blocking sets with respect to $k$-spaces in $\PG(m,2)$ that contain an $(m-k)$-space are called \textit{trivial} blocking sets. A blocking set $B$ such that no subset of $B$ is a blocking set is called a \textit{minimal} blocking set. 
The smallest non-trivial (and hence minimal) blocking sets in $\PG(m,2)$ with respect to $k$-spaces were found by Govaerts and Storme~\cite{govaerts2006classification}:
\begin{theorem}{\cite[Theorem 1.4. (3)]{govaerts2006classification}}\label{thm:govaerts}
    In $\PG(m,2)$ for $m \geq 3$, the smallest non-trivial blocking sets with respect to $k$-spaces $(1 \leq k \leq m-2)$ have size $2^{m-k+1}+2^{m-k-1}+2^{m-k-2}-1$. 
\end{theorem}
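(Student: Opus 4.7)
The plan is to prove both directions separately: exhibit a non-trivial blocking set of the claimed size $2^{m-k+1}+2^{m-k-1}+2^{m-k-2}-1$, and then show by induction on the projective codimension $d=m-k$ that nothing smaller non-trivial exists.

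For the upper bound (construction), I would fix a $(d+1)$-space $S\subseteq \PG(m,2)$. Any $k$-space $K$ of $\PG(m,2)$ satisfies $\dim(K\cap S)\geq k+(d+1)-m=1$ by the Grassmann inequality, so $K\cap S$ contains a projective line of $S$. Hence it suffices to construct inside $S\cong\PG(d+1,2)$ a non-trivial blocking set with respect to \emph{lines} of the target size. A concrete candidate is: pick two hyperplanes $L_1,L_2$ of $S$ meeting in a $(d-1)$-space $C$ and adjoin a short affine ``tail'' $T$ lying outside $L_1\cup L_2$, setting $B=(L_1\cup L_2)\cup T$. Inclusion--exclusion yields $|L_1\cup L_2|=2^{d+2}-2^d-1$, and tuning the size of $T$ brings the total to exactly $2^{d+1}+2^{d-1}+2^{d-2}-1$; the blocking property for lines of $S$ is checked by a short case analysis depending on whether a given line of $S$ lies inside $L_1\cup L_2$ or crosses into the tail.

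For the lower bound, induct on $d$. The base case $d=2$ (so $k=m-2$) is the hard combinatorial core: every $(m-2)$-space must contain a point of $B$, and analyzing the complement $B^c$ (which is $(m-2)$-space-free) via standard upper bounds on such sets forces $|B|\geq 10=2^3+2+1-1$. For the inductive step, pick a hyperplane $H\subset\PG(m,2)$. Every $k$-space of $H$ is also a $k$-space of $\PG(m,2)$, so $B\cap H$ blocks all $k$-spaces of $H\cong\PG(m-1,2)$. If $B\cap H$ is non-trivial, the inductive hypothesis at codimension $d-1$ gives $|B\cap H|\geq 2^d+2^{d-2}+2^{d-3}-1$, and adding in contributions from points of $B$ outside $H$ forced by the $k$-spaces not contained in $H$ pushes the total past the required bound. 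The remaining alternative is that $B\cap H$ is a \emph{trivial} blocking set of $H$ for every hyperplane $H$, which imposes strong local structure on $B$.

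The main obstacle I anticipate is exactly this last subcase: showing that if $B\cap H$ contains an $(m-k-1)$-space for every hyperplane $H$, then $B$ itself must contain an $(m-k)$-space (contradicting non-triviality) or else have size already above the bound. Two restrictions $B\cap H_1$ and $B\cap H_2$ contain $(m-k-1)$-spaces that need not extend to any common $(m-k)$-space of $\PG(m,2)$, and ruling out such ``twisted'' configurations rigorously requires a careful double-count of incidences between points of $B$ and flags of the form $(m-k-1)$-space inside $(m-k)$-space, together with a pigeonhole over the pencil of hyperplanes through a fixed codimension-$2$ subspace. This is where I expect any proof, including Govaerts and Storme's, to do its heaviest combinatorial work.
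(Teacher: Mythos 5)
This statement is quoted in the paper from Govaerts and Storme and is not proved there, so your attempt has to stand on its own; as written it does not. The most concrete problem is in your construction. Writing $d=m-k$ (note $d\ge 2$ since $k\le m-2$), two hyperplanes $L_1,L_2$ of $S\cong\PG(d+1,2)$ give $|L_1\cup L_2|=2(2^{d+1}-1)-(2^d-1)=3\cdot 2^d-1$, which already \emph{exceeds} the target size $2^{d+1}+2^{d-1}+2^{d-2}-1=\tfrac{11}{4}\cdot 2^d-1$, so no choice of a nonnegative ``tail'' $T$ can tune the total down to the claimed value. Worse, any set containing $L_1$ contains a $d$-space of $\PG(m,2)$, i.e.\ an $(m-k)$-space, and is therefore a \emph{trivial} blocking set by definition; adding points can never repair this. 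The genuine extremal examples look quite different: inside a suitable $(d+1)$-space they are cones with vertex an $(m-k-3)$-space over the $10$-point non-trivial line-blocking set of a $\PG(3,2)$ (equivalently, the complement in that $(d+1)$-space of a cap of size $5\cdot 2^{d-2}$), which is consistent with your Grassmann reduction to lines but not with the ``two hyperplanes plus tail'' candidate.

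The lower bound is also not established. Over $\F_2$ a line has three points, so a set blocks all lines exactly when its complement is a cap; hence the minimality statement (already in the reduced line case) is equivalent to determining the largest caps of $\PG(N,2)$ not contained in an affine part, i.e.\ to the Davydov--Tombak classification of large caps, which says the second-largest complete caps have size $5\cdot 2^{N-3}$. This is a substantial theorem, and your base case ``$d=2$: standard upper bounds force $|B|\ge 10$'' silently assumes exactly this kind of result rather than proving it. In the inductive step, the branch where $B\cap H$ is non-trivial does not obviously close either: a $k$-space not contained in $H$ meets $H$ in a $(k-1)$-space and may well be blocked by a point of $B\cap H$, so it is not clear that enough points of $B$ outside $H$ are ``forced'' to push the count from $2^{d}+2^{d-2}+2^{d-3}-1$ up to $2^{d+1}+2^{d-1}+2^{d-2}-1$; and the branch where every hyperplane section is trivial is, as you yourself note, left open. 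So both halves of the proof contain real gaps, and the construction half is incorrect as stated.
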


The following result shows that small blocking sets with the property that all relevant intersections are odd are necessarily minimal. The proof is inspired by~\cite[Lemma 3.1]{szHonyi2001small}. We will use the well known fact that each $t$-space in $\PG(m,2)$ is contained in exactly $2^{m-t}-1$ distinct $(t+1)$-spaces.
\begin{proposition} \label{prop:szonyi}
    Let $B$ be a blocking set of $\PG(m,2)$ with respect to $k$-spaces, such that $B$ intersects every $k$-space in an odd amount of points. If $|B|\leq 2^{m-k+2}-2$ then $B$ is a minimal blocking set.
\end{proposition}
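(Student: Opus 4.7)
The plan is to proceed by contradiction, following the style of~\cite[Lemma 3.1]{szHonyi2001small}. Assume $B$ is not minimal; then there is some $P \in B$ such that $B \setminus \{P\}$ is still a blocking set with respect to $k$-spaces. Thus every $k$-space $K$ through $P$ contains at least one point of $B$ besides $P$, giving $|K \cap B| \geq 2$; combined with the odd-intersection hypothesis this sharpens to $|K \cap B| \geq 3$ for every $k$-space through $P$.

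I would then perform a double count of the pairs $(Q, K)$ with $Q \in B \setminus \{P\}$ and $K$ a $k$-space containing both $P$ and $Q$. Counting by $K$: each such $k$-space contributes $|K \cap B| - 1 \geq 2$ pairs, and the number of $k$-spaces through $P$ in $\PG(m,2)$ is the Gaussian binomial coefficient ${m \brack k}_2$. Counting by $Q$: each $Q$ together with $P$ determines a unique line, and the number of $k$-spaces through that line equals ${m-1 \brack k-1}_2$. This yields
\[
(|B|-1)\,{m-1 \brack k-1}_2 \;\geq\; 2\,{m \brack k}_2,
\]
and the identity ${m \brack k}_2 / {m-1 \brack k-1}_2 = (2^m - 1)/(2^k - 1)$ simplifies this to $|B| \geq 1 + 2(2^m-1)/(2^k-1)$. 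For $k = 1$ this already gives $|B| \geq 2^{m+1}-1$, directly contradicting $|B| \leq 2^{m+1}-2 = 2^{m-k+2}-2$.

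For $k \geq 2$ the naive double count is too weak, and I would refine it via a projection from $P$. Projecting $B \setminus \{P\}$ to $\PG(m-1,2)$ produces a multiset $\bar B$ with multiplicities in $\{1,2\}$ (since the pair $\{Q, P+Q\}$ maps to a single point of $\PG(m-1,2)$, and either one, both, or neither of $Q, P+Q$ may lie in $B$). The oddness of $|K \cap B|$ together with $|K \cap B| \geq 3$ translates into: every $(k-1)$-space $\bar K$ of $\PG(m-1,2)$ meets $\bar B$ in a weighted count that is even and at least $2$. Decomposing $\bar B$ into its multiplicity-$1$ and multiplicity-$2$ components and applying the Bose--Burton bound (Theorem~\ref{thm:boseburton}) to the blocking structure supplied by the multiplicity-$2$ component should boost the bound to $|B| \geq 2(2^{m-k+1}-1)+1 = 2^{m-k+2}-1$, again contradicting the hypothesis.

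The main obstacle is this last refinement: the multiplicity-$1$ component is forced to intersect every $(k-1)$-space in an even count, a condition tied to the dual of the projective Reed--Muller code. The delicate step will be showing that the global constraint ``even weighted intersection $\geq 2$'' forces enough multiplicity-$2$ mass to carry the Bose--Burton bound twice over, thereby closing the gap between the simple double-counting bound and the desired tight bound.
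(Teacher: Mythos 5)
Your argument is incomplete: the case $k\geq 2$, which is the substance of the proposition, rests on a step you yourself flag as open, and the specific route you sketch for it does not work as stated. After projecting from $P$, the condition you obtain is that every $(k-1)$-space of $\PG(m-1,2)$ meets the weighted multiset $\bar B$ (weights in $\{0,1,2\}$) with \emph{total weight} even and at least $2$. This can perfectly well be achieved by two multiplicity-one points, so the multiplicity-two component need not be a blocking set at all, and Theorem~\ref{thm:boseburton} cannot be applied to it. What you would actually need is a weighted $2$-fold Bose--Burton-type statement: any weight function on $\PG(m-1,2)$ with values in $\{0,1,2\}$ giving every $(k-1)$-space weight at least $2$ has total weight at least $2(2^{m-k+1}-1)$. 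That statement is plausible (and would finish the proof, since the total weight is $|B|-1$), but it is not proved in your proposal, and it is not a formal consequence of decomposing $\bar B$ by multiplicity. Note also that your double count only uses the hypothesis $|B|\leq 2^{m-k+2}-2$ at the very end; since the bound you must beat is essentially $2^{m-k+2}-1$, the size hypothesis has to enter the argument in a more structural way, which is a sign that this route needs a genuinely new ingredient.

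The paper's proof closes exactly this gap by a much more elementary device, and it is where the size hypothesis does its work. Starting as you do (every $k$-space through $P$ meets $B$ in at least $3$ points, by oddness), one constructs, by induction on $t\leq k-1$, a $t$-space $V_t$ with $V_t\cap B=\{P\}$: given $V_{t-1}$, the number of $t$-spaces through $V_{t-1}$ is $2^{m-t+1}-1\geq 2^{m-k+2}-1>|B|$, and each point of $B\setminus\{P\}$ outside $V_{t-1}$ lies in only one of them, so some $t$-space avoids $B\setminus\{P\}$. With a $(k-1)$-space $V$ tangent to $B$ at $P$ in hand, the $2^{m-k+1}-1$ many $k$-spaces through $V$ each contain at least two points of $B\setminus\{P\}$, all lying outside $V$ and hence in exactly one such $k$-space, giving $|B|\geq 2(2^{m-k+1}-1)+1=2^{m-k+2}-1$, a contradiction. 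If you want to salvage your projection approach, you would have to prove the weighted two-fold blocking bound above (for instance by an induction of the same tangent-space flavor), at which point the detour through the quotient buys you nothing over the direct argument.
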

\begin{proof}
    Assume that $B$ is not minimal, and let $P \in B$ such that $B\setminus \{P\}$ is still a blocking set with respect to $k$-spaces. In particular, every $k$-space containing $P$ must intersect $B$ in at least $3$ points. 
    
    Suppose that $V$ is a $(k-1)$-space such that $V\cap B=\{P\}$. There are then $2^{m-k+1}-1$ many $k$-spaces containing $V$, and each of them has to contain at least $2$ points of $B\setminus \{P\}$. Thus $|B| \geq 2\cdot (2^{m-k+1}-1)+1$, contradicting the assumption, and the claim follows. It thus remains to show that such a $V$ exists.

    We show by induction that there is a $t$-space $V_t$ such that  $V_t\cap B=\{P\}$ for each $0 \leq t \leq k-1$. For $t=0$, we can simply take $V_0$ to be the point $P$ itself. So assume by induction hypothesis that there exists a $(t-1)$-space $V_{t-1}$, such that $V_{t-1}\cap B=\{P\}$. There are again $2^{m-t+1}-1$ distinct $t$-spaces containing $V_{t-1}$. Since $t \leq k-1$, we have $2^{m-t+1}-1>|B|$, by our assumption on its size. So there has to be one $t$-space that does not contain an element of $B\setminus\{P\}$, proving our claim.
\end{proof}

We can combine Theorem~\ref{thm:govaerts} and Proposition~\ref{prop:szonyi} to get:
\begin{theorem} \label{thm:generalbentbound}
    Let $n$ be even with $n\leq 2m-6$ and $F \colon \mathbb{F}_2^n \rightarrow \mathbb{F}_2^m$ be quadratic. If $F$ is not an MNBC function then $|N_F| \geq 2^{m-n/2}+2^{m-n/2-2}+2^{m-n/2-3}-1$. In other words, $F$ has at most $2^{m}-2^{m-n/2}-2^{m-n/2-2}-2^{m-n/2-3}$ bent component functions. 
\end{theorem}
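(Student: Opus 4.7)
The plan is to exploit Corollary~\ref{cor:blockingset}, which tells us that $N_F$ is a blocking set of $\PG(m-1,2)$ with respect to $(n/2)$-spaces and, crucially, intersects every such $(n/2)$-space in an odd number of points. The argument then splits into two cases depending on whether this blocking set is trivial or non-trivial in the sense defined just before Theorem~\ref{thm:govaerts}. Note that the hypothesis $n\le 2m-6$ is exactly what guarantees $1\le n/2\le (m-1)-2$ and $m-1\ge 3$, so both Theorem~\ref{thm:govaerts} and Proposition~\ref{prop:szonyi} are applicable in $\PG(m-1,2)$ with $k$-parameter $n/2$.

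In the non-trivial case, I would invoke Theorem~\ref{thm:govaerts} directly in $\PG(m-1,2)$ to obtain
\[|N_F|\ge 2^{(m-1)-n/2+1}+2^{(m-1)-n/2-1}+2^{(m-1)-n/2-2}-1 = 2^{m-n/2}+2^{m-n/2-2}+2^{m-n/2-3}-1,\]
which is precisely the claimed bound.

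In the trivial case, $N_F$ contains some $(m-1-n/2)$-dimensional projective subspace $V$; a dimension count (the projective dimensions $(m-1-n/2)$ and $n/2$ sum to $m-1$) shows that $V$ itself is already a blocking set with respect to $(n/2)$-spaces, and $|V|=2^{m-n/2}-1$. By the equality clause of Theorem~\ref{thm:boseburton}, the assumption that $F$ is not MNBC forces $|N_F|>2^{m-n/2}-1$, so $V\subsetneq N_F$. For any $P\in N_F\setminus V$, the set $N_F\setminus\{P\}$ still contains $V$ and hence is still a blocking set, so $N_F$ is not minimal. Proposition~\ref{prop:szonyi}, in contrapositive form, then yields $|N_F|>2^{(m-1)-n/2+2}-2=2^{m-n/2+1}-2$, i.e., $|N_F|\ge 2^{m-n/2+1}-1$. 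Since $2^{m-n/2}\ge 2^{m-n/2-2}+2^{m-n/2-3}$, this lower bound is at least $2^{m-n/2}+2^{m-n/2-2}+2^{m-n/2-3}-1$, so the claim holds in this case as well.

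The main obstacle, as far as I can see, is simply careful bookkeeping of the index shifts between the cited blocking-set results (stated in $\PG(m,q)$) and our projective space $\PG(m-1,2)$, and confirming that ``$F$ is not MNBC'' is correctly translated into ``$N_F$ is not itself an $(m-1-n/2)$-space'' via the equality case of Theorem~\ref{thm:boseburton}. Once these translations are in place, the two-case split above reduces to a direct application of the two cited results.
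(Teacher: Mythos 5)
Your proof is correct and relies on exactly the same ingredients as the paper's (Corollary~\ref{cor:blockingset}, Proposition~\ref{prop:szonyi}, and Theorem~\ref{thm:govaerts}); the paper merely packages them as a single contradiction argument (small size $\Rightarrow$ minimal $\Rightarrow$ trivial $\Rightarrow$ MNBC) instead of your trivial/non-trivial case split. The index bookkeeping and the use of $n\le 2m-6$ in your write-up are accurate, so no changes are needed.
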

\begin{proof}
    Assume $|N_F|<2^{m-n/2}+2^{m-n/2-2}+2^{m-n/2-3}-1$. By Corollary~\ref{cor:blockingset}, $N_F$ is a blocking set in $\PG(m-1,2)$ with respect to $(n/2)$-spaces such that $N_F$ intersects all $(n/2)$-spaces in an odd amount of points. By Proposition~\ref{prop:szonyi}, $N_F$ has to be a minimal blocking set, which by Theorem~\ref{thm:govaerts} implies that it is trivial. But then $F$ is an MNBC function, contradicting the assumption.
\end{proof}

\subsection{The case of quadratic APN functions}
It is known that quadratic APN functions cannot be MNBC functions, as shown in~\cite[Theorem 2]{mesnager2019further}. This also follows immediately from the following observation made in~\cite{kolsch2024combinatorial}. 
\begin{theorem} \label{thm:1mod4}
      Let  $n$ be even and $F \colon \F_2^n \rightarrow \F_2^n$ be a quadratic APN function. Then $|N_F| \equiv 1 \pmod 4$.
\end{theorem}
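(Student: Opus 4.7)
The plan is to exploit the vector space partition framework developed in Section~\ref{s:partitions}. By Corollary~\ref{cor:nb_partition}, we have $|N_F| = |\mathcal{V}_F|$, so it suffices to count the subspaces in $\mathcal{V}_F$ modulo $4$. By Corollary~\ref{cor:even}, every subspace in $\mathcal{V}_F$ has even dimension, so writing the type of $\mathcal{V}_F$ as $[2^{k_2}, 4^{k_4}, \dots, n^{k_n}]$, the quantity of interest is $|N_F| = \sum_{i=1}^{n/2} k_{2i}$.

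The argument is then a one-line congruence applied to the packing condition in Theorem~\ref{thm:conditions}(i), namely
\[\sum_{i=1}^{n/2} k_{2i}(2^{2i} - 1) = 2^n - 1.\]
Reducing modulo $4$: for every $i \geq 1$, we have $2^{2i} \equiv 0 \pmod 4$, hence $2^{2i} - 1 \equiv 3 \pmod 4$; similarly $2^n - 1 \equiv 3 \pmod 4$ since $n \geq 2$ is even. The packing condition therefore collapses to $3 |N_F| \equiv 3 \pmod 4$. Multiplying both sides by $3$ (its own inverse modulo $4$) immediately yields $|N_F| \equiv 1 \pmod 4$.

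I do not expect any genuine obstacle, as both ingredients, the evenness of the dimensions (Corollary~\ref{cor:even}) and the packing identity (Theorem~\ref{thm:conditions}(i)), are already in place; the work has really been done in the preceding sections and this theorem is a clean corollary. What is worth emphasizing is where each hypothesis enters: without the evenness assumption, a single $1$-dimensional part in the partition would contribute $2^1 - 1 = 1 \not\equiv 3 \pmod 4$, breaking the congruence; and without $n$ even with $n \geq 2$, the right-hand side $2^n - 1$ would not reduce to $3 \pmod 4$ either. Both are guaranteed in our setting, and the conclusion follows.
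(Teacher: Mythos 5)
Your proof is correct. Note, though, that the paper does not prove this statement at all: Theorem~\ref{thm:1mod4} is imported as an observation from the cited reference \cite{kolsch2024combinatorial}, so your argument supplies a self-contained derivation inside the paper's own framework. It is worth seeing how close the two really are: the paper's remark after Theorem~\ref{thm:conditions} points out that the packing condition (i) is just a rewriting of the fourth-moment identity of Proposition~\ref{prop:fourth}, namely $\sum_{b\neq 0}(2^{l_b}-1)=2^n-1$ with all $l_b$ even; reducing that identity mod $4$ is exactly your computation (each non-bent component contributes $2^{l_b}-1\equiv 3$, bent components contribute $0$, and $2^n-1\equiv 3$), and this is presumably the content of the cited observation. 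So your route through $\mathcal{V}_F$ (Corollaries~\ref{cor:nb_partition} and~\ref{cor:even}) buys a nice geometric phrasing but no extra generality --- indeed one could skip the partition language entirely and argue from Proposition~\ref{prop:fourth} plus the evenness of the $l_b$. One small point of scope: as printed, the theorem does not state that $n$ is even, but the congruence is false for odd $n$ (there all $2^n-1$ nontrivial components are non-bent, and $2^n-1\equiv 3 \pmod 4$), so the evenness hypothesis is implicit in the statement; your proof, which leans on the Section~\ref{s:partitions} machinery and on $2^n-1\equiv 3\pmod 4$, covers exactly that intended even-$n$ case, and you correctly flag where evenness of $n$ and of the dimensions enters.
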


However, no further constraints on the number of bent component functions of a quadratic APN function are known so far. Theorem~\ref{thm:generalbentbound} applied to APN functions thus provides the first non-trivial  upper bound on the number of bent component functions of a quadratic APN function.
\begin{theorem}
\label{thm:APN bent upper bound}
    Let $n\geq 6$ be even and $F \colon \F_2^n \rightarrow \F_2^n$ be a quadratic APN function. Then $|N_F| \geq 2^{n/2}+2^{n/2-2}+2^{n/2-3}-1$, where equality can occur only if $n=8$. In other words, the number of bent component functions of $F$ is at most $2^n-2^{n/2}-2^{n/2-2}-2^{n/2-3}$, and strictly less if $n \neq 8$.
\end{theorem}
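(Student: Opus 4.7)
The plan is to combine the general blocking set bound from Theorem~\ref{thm:generalbentbound} with the mod~$4$ divisibility condition from Theorem~\ref{thm:1mod4}. The former supplies the lower bound on $|N_F|$ whenever $F$ is not MNBC, and the latter both rules out the MNBC alternative and forces strict inequality outside $n=8$.

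First I would apply Theorem~\ref{thm:generalbentbound} with $m=n$. The hypothesis becomes $n \leq 2n-6$, i.e.\ $n\geq 6$, which matches the hypothesis of the theorem. The conclusion says that either $F$ is MNBC, or
\[|N_F| \geq 2^{n/2}+2^{n/2-2}+2^{n/2-3}-1.\]
To exclude the MNBC case, recall that an MNBC function satisfies $|N_F|=2^{n/2}-1$. Since $n\geq 6$ is even, $4\mid 2^{n/2}$ and hence $2^{n/2}-1\equiv 3\pmod 4$. But by Theorem~\ref{thm:1mod4} we must have $|N_F|\equiv 1\pmod 4$, a contradiction. Therefore $F$ is not MNBC and the stated lower bound holds.

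For the equality part, I would compute the right-hand side modulo~$4$ in the three regimes.  For $n=6$ it equals $8+2+1-1=10\equiv 2\pmod 4$; for $n=8$ it equals $16+4+2-1=21\equiv 1\pmod 4$; for $n\geq 10$ each of the exponents $n/2$, $n/2-2$, $n/2-3$ is at least~$2$, so the whole sum is $\equiv -1\equiv 3\pmod 4$. Thus the bound is congruent to~$1\pmod 4$ precisely when $n=8$. Since $|N_F|\equiv 1\pmod 4$ by Theorem~\ref{thm:1mod4}, equality in the bound is arithmetically impossible for $n\neq 8$, yielding strict inequality in those cases.

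There is no genuine obstacle here: everything follows from stitching the two earlier results together. The only point requiring care is the modular bookkeeping in the third paragraph, in particular verifying that $n=6$ really does produce a residue different from~$1$ (so that the base case of the strict inequality is correctly handled) and that no additional congruence miracle occurs for $n\geq 10$.
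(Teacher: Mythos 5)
Your proof is correct and matches the paper's own argument: apply Theorem~\ref{thm:generalbentbound} with $m=n$, rule out the MNBC alternative via the congruence $|N_F|\equiv 1 \pmod 4$ from Theorem~\ref{thm:1mod4}, and note that $2^{n/2}+2^{n/2-2}+2^{n/2-3}-1\equiv 1\pmod 4$ only when $n=8$, which forces strict inequality otherwise. Your modular bookkeeping (residues $2$, $1$, $3$ for $n=6$, $n=8$, $n\geq 10$) is accurate, so nothing further is needed.
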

\begin{proof}
    This follows from Theorem~\ref{thm:generalbentbound} by setting $n=m$ together with Theorem~\ref{thm:1mod4} under the observation that $2^{n/2}+2^{n/2-2}+2^{n/2-3}-1 \equiv 1 \pmod 4$ if and only if $n=8$.
\end{proof}

Let us end this section with an explicit example that shows that the set of non-trivial non-bent components can be a trivial blocking set, but is not necessarily so.
\begin{example}
    Let $n=2m$ be even and $F \colon \F_{2^n} \rightarrow \F_{2^n}$ be the APN function defined by $F(x)=x^3$. It is well known that in this case $N_F=\{x^3 \colon x \in \F_{2^n}^*\}$, i.e., the set of non-zero cubes. If $m$ is odd, then $3|(2^m+1)$. Every element in $\F_{2^m}$ is a $(2^m+1)$-st power in $\F_{2^n}$, in particular it is a cube. So $\F_{2^m}$ is an $m=n/2$-dimensional subspace entirely contained in $N_F \cup \{0\}$. In other words, $N_F$ is in this case a trivial blocking set.

    If $m$ is even, then by~\cite[Lemma 16]{golouglu2020almost}, the set of cubes does not contain an $m$-dimensional subset, so $N_F$ is non-trivial.
\end{example}
For some computational results on the maximal subspaces contained in $N_F\cup \{0\}$ for some quadratic APN functions in dimensions $\leq 10$, we also refer to~\cite[Table 2]{golouglu2021ccz}. These results also show that $N_F$ is sometimes a trivial and sometimes a nontrivial blocking set.

\begin{remark}
    We note that all results as well as proofs in this section apply not only to quadratic APN functions $F$, but to all \emph{plateaued} APN functions, since the only property of quadratic functions we have used in this section is that $|W_F(b,0)| \in \{0,2^{(n+k)/2}\}$ for some $k$. However, this is not the case for the results in Section~\ref{s:partitions}, where we used the fact that the differential sets of quadratic APN functions are affine hyperplanes, which is not necessarily true for plateaued APN functions. In fact, these functions are usually called \emph{crooked}, see~\cite{kyureghyan2007crooked}. For simplicity (and since no crooked, non-quadratic functions are known), we stick to the terminology \emph{quadratic} in this paper. 
\end{remark}

\section{Combining vector space partitions and blocking sets } \label{s:combine}

In Section~\ref{s:partitions}, we showed that every quadratic APN function is uniquely linked to a vector space partition of the ambient space, and we derived conditions on the amplitude distribution of such a function based on results on vector space partitions. In this section, we investigate the question: Which vector space partitions are never associated to any quadratic APN function? Currently, all known infinite families of quadratic APN functions on $\F_2^n$ with even $n$ have classical Walsh spectrum, which has an associated vector space partition of type $[2^{\frac{1}{3}(2^n-1)}]$.

Clearly, by Corollary~\ref{cor:even}, any vector space partition containing odd-dimensional subspaces cannot correspond to the amplitude distribution of any quadratic APN function. Furthermore, the trivial partition $[n^1]$ of $\F_2^n$ is also never associated to an APN function  as long as $n>2$. This can be seen in multiple ways. For instance, such an APN function would have $2^n-1$ bent components by Corollary~\ref{cor:nb_partition}, which is impossible by Theorem~\ref{thm:APN bent upper bound} for $n \geq 6$. Another way to show this is that by Theorem~\ref{thm:dim=amplitude}, such a function would have linearity $2^n$, which is impossible  if $n>2$~\cite[Proposition 161]{Carlet21}.

Using Theorem~\ref{thm:APN bent upper bound}, we can prove that the $(n/2)$-spread is never associated to an amplitude distribution. If $4 \mid n$, this is the first example of a non-trivial vector space partition with even dimensional subspaces whose associated amplitude distribution does not occur for a quadratic APN function on $\F_2^n$:
\begin{theorem} \label{thm:spread}
    Let $n\geq 6$ be even. Then vector space partitions of type
    $\left[(n/2)^{2^{\frac{n}{2}}+1}\right]$
    do not correspond to the amplitude distribution of any quadratic APN function.
\end{theorem}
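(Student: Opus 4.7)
The plan is a direct contradiction using the two main tools already established in the paper. Suppose, for contradiction, that there is a quadratic APN function $F \colon \F_2^n \rightarrow \F_2^n$ whose associated vector space partition $\mathcal V_F$ is of type $[(n/2)^{2^{n/2}+1}]$. First, I would note that $\mathcal V_F$ consists of exactly $2^{n/2}+1$ subspaces of dimension $n/2$, so that by Corollary~\ref{cor:nb_partition},
\[ |N_F| = |\mathcal V_F| = 2^{n/2}+1. \]

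Next, I would invoke the lower bound on $|N_F|$ given by Theorem~\ref{thm:APN bent upper bound}, which applies since $n \geq 8$ is even: namely $|N_F| \geq 2^{n/2} + 2^{n/2-2} + 2^{n/2-3} - 1$. Combining this with the equality above yields
\[ 2^{n/2} + 1 \geq 2^{n/2} + 2^{n/2-2} + 2^{n/2-3} - 1, \]
which simplifies to $2 \geq 2^{n/2-2} + 2^{n/2-3}$. For $n \geq 8$ we have $n/2 \geq 4$, so the right-hand side is at least $2^{2}+2^{1}=6$, producing the desired contradiction.

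Strictly, the hypothesis $4 \mid n$ is not used in the final numeric comparison; its role is that it guarantees $n/2$ is even, so that the partition type $[(n/2)^{2^{n/2}+1}]$ consists of even-dimensional subspaces and is not excluded trivially by Corollary~\ref{cor:even}. I would include a short sentence pointing this out so the reader sees why the divisibility hypothesis appears in the statement. There is no genuine obstacle in this proof; all the substance was already packaged into Theorem~\ref{thm:APN bent upper bound}, and the argument here is a one-line count showing that an $(n/2)$-spread is simply too small to accommodate the required number of non-bent components of a quadratic APN function in this dimension range.
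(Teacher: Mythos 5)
Your proof is correct and follows essentially the same route as the paper: identify $|N_F| = |\mathcal{V}_F| = 2^{n/2}+1$ for an $(n/2)$-spread and contradict the lower bound of Theorem~\ref{thm:APN bent upper bound}. The explicit numeric comparison and the remark on the role of the hypothesis $4 \mid n$ (ensuring $n/2$ is even so the type is not already excluded by Corollary~\ref{cor:even}) are fine additions but do not change the argument.
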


\begin{proof}
If $n/2$ is odd, then the statement is trivially true because the vector space dimensions need to be even. Otherwise, the
 $(n/2)$-spread corresponds to an amplitude distribution $[0^{2^n-2^{n/2}-2},(n/2)^{2^{n/2}+1}]$, in particular, a quadratic APN function with this amplitude distribution would have $|N_F|=2^{n/2}+1$. By Theorem~\ref{thm:APN bent upper bound}, a quadratic APN function 
$F \colon \F_2^n \to \F_2^n$ has more than $2^{n/2}+1$ non-bent components.
\end{proof}

\begin{corollary} \label{cor:small}
    Let $n \geq 8$ be even and $F\colon \F_2^n \rightarrow \F_2^n$ be a quadratic APN function. Then $F$ has a non-bent component function with amplitude less than $2^{3n/4}$.
\end{corollary}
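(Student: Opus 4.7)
The plan is to argue by contradiction using the vector space partition $\mathcal{V}_F$ associated with $F$. Suppose every non-bent component of $F$ has amplitude at least $2^{3n/4}$; by Theorem~\ref{thm:dim=amplitude} this is equivalent to saying every member of $\mathcal{V}_F$ has dimension at least $n/2$, and by Corollary~\ref{cor:even} each such dimension is even.

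First I would rule out the possibility that some $V \in \mathcal{V}_F$ has dimension $d > n/2$. If such a $V$ existed, Theorem~\ref{thm:conditions}(iii) would force every other subspace in $\mathcal{V}_F$ to have dimension at most $n - d < n/2$, contradicting our standing hypothesis. Hence $\mathcal{V}_F = \{V\}$, and the packing condition in Theorem~\ref{thm:conditions}(i) yields $2^d - 1 = 2^n - 1$, i.e., $d = n$. But then, by Theorem~\ref{thm:dim=amplitude}, $F$ would have linearity $2^n$, which is impossible for an APN function (see Example~\ref{ex:maxlinearity}).

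Hence every subspace in $\mathcal{V}_F$ has dimension exactly $n/2$. Since these dimensions must be even, this requires $4 \mid n$. The packing condition then gives $|\mathcal{V}_F| = (2^n - 1)/(2^{n/2} - 1) = 2^{n/2} + 1$, so $\mathcal{V}_F$ would be an $(n/2)$-spread. But this is precisely the partition type excluded by Theorem~\ref{thm:spread}, producing the final contradiction.

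The main obstacle is really already absorbed into Theorem~\ref{thm:spread}, which in turn draws on the bent-component upper bound of Theorem~\ref{thm:APN bent upper bound}. With those in hand, the deduction above is a clean application of the dimension bounds and the packing condition; the only subtlety is noticing that when $n \equiv 2 \pmod 4$ there is no $(n/2)$-spread with even-dimensional parts at all, so the spread case is vacuous and the first half of the argument already closes it out.
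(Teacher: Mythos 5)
Your proof is correct and follows essentially the same route as the paper: it rules out a component of amplitude above $2^{3n/4}$ coexisting with other large ones via the dimension/packing bounds (which is exactly the content of Theorem~\ref{thm:onebig}), and then excludes the $(n/2)$-spread via Theorem~\ref{thm:spread}. The difference is only presentational: you argue directly on $\mathcal{V}_F$ and spell out the edge cases (a lone non-bent component forcing linearity $2^n$, and the parity of $n/2$ when $n \equiv 2 \pmod 4$) that the paper's terser proof leaves implicit.
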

\begin{proof}
 By Theorem~\ref{thm:spread}, it is impossible that all non-bent component functions have amplitude $2^{3n/4}$, so we necessarily have one  non-trivial component function of $F$ with amplitude less than $2^{3n/4}$ or one with amplitude larger than $2^{3n/4}$. In the second case, Theorem~\ref{thm:onebig} guarantees that all other component functions have amplitude less than $2^{3n/4}$.
\end{proof}

Of course, we can apply Theorem~\ref{thm:APN bent upper bound} also to vector space partitions outside of the $(n/2)$-spread, as the following example shows. 
\begin{example}
  Consider a vector space partition $\mathcal{V}$ of type $[2^{16}, 4^1, 6^{2^6}]$ of $\F_2^{12}$. Clearly, such a vector space partition exists: take a $6$-spread of $\F_2^{12}$, and then use Construction~\ref{constr:bu} to partition one of the spaces of dimension 6 into a space of dimension 4 and 16 spaces of dimension 2. We have $|\mathcal{V}| = 2^6+16+1<2^6+2^4+2^3-1$, so by Corollary~\ref{cor:nb_partition} and Theorem~\ref{thm:APN bent upper bound} there is no quadratic APN function whose amplitude distribution corresponds to $\mathcal{V}$. 
\end{example}

\subsection{Comparison of lower bounds on \(|N_F|\)}

Recall the three lower bounds on $|N_F|$ established in Sections~\ref{s:partitions} and~\ref{s:blocking}.  
Let $n$ be even and let $F \colon \F_2^n \rightarrow \F_2^n$ be a quadratic APN function.

\begin{itemize}
  \item[(A)] \textbf{Proposition~\ref{prop:LB1}:}  
  If $F$ has linearity $2^{(n+k)/2}$ with $k \leq n/2$, and $n=sk+r$ where $0 \le r < k$, then
  \[
    |N_F|\ \geq\ 
    \begin{cases}
      \dfrac{2^n-1}{2^k-1}, & \text{if } k\mid n,\\[6pt]
      1+2^{\lceil (k+r)/2\rceil}+2^{k+r}\displaystyle\sum_{i=0}^{s-2}2^{ik}, & \text{if } k\nmid n.
    \end{cases}
  \]

  \item[(B)] \textbf{Theorem~\ref{thm:APN bent upper bound}:}  
  For every quadratic APN function on $\F_2^n$ with $n\geq 6$,
  \[
    |N_F|\ \geq\ 2^{n/2}+2^{\,n/2-2}+2^{\,n/2-3}-1.
  \]

  \item[(C)] \textbf{Proposition~\ref{prop:LB2}:}  
  If $F$ has linearity $2^{(n+k)/2}$ and a second non-bent component of amplitude $2^{(n+\ell)/2}$ with $\ell < n-k$, then
  \[
    |N_F|\ \geq\ 2^k + 2^\ell + 1.
  \]
\end{itemize}

It is natural to ask in which parameter ranges each bound provides the strongest estimate. The following proposition summarizes the comparison.

\begin{proposition} \label{prop:comp}
Let \(n \geq 6\) be even and let \(F \colon \F_2^n \rightarrow \F_2^n\) be a quadratic APN function with linearity \(2^{(n+k)/2}\).  
Write \(n = sk + r\) with \(0 \le r < k\). Then:
\begin{enumerate}[(i)]
  \item If \(k < n/2\), then bound \textnormal{(A)} yields the strongest lower bound, dominating both \textnormal{(B)} and \textnormal{(C)}.
  \item If \(k = n/2\), then bound \textnormal{(B)} is strictly stronger than \textnormal{(A)} and \textnormal{(C)}, except for $n=8$, where bound \textnormal{(B)} and \textnormal{(C)} coincide.
  \item If \(k > n/2\), then bound \textnormal{(C)} dominates; bound \textnormal{(A)} is not applicable in this regime.
\end{enumerate}
\end{proposition}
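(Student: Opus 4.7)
The plan is to derive clean closed-form estimates $L_A$, $L_B$, $L_C$ for the three bounds and compare them pairwise in each regime. Throughout, I use Corollary~\ref{cor:even} that the amplitude indices $k$ and $\ell$ are even positive integers.

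First, I would expand bound \textnormal{(A)}: in the $k\mid n$ subcase, $L_A=\tfrac{2^n-1}{2^k-1}=1+2^k+2^{2k}+\dots+2^{n-k}$; in the $k\nmid n$ subcase, $L_A=1+2^{\lceil(k+r)/2\rceil}+2^{k+r}+2^{2k+r}+\dots+2^{n-k}$. Either way the dominant summand is $2^{n-k}$, so $L_A\ge 2^{n-k}$. Bound \textnormal{(B)} equals $L_B=\tfrac{11}{8}\cdot 2^{n/2}-1<2^{n/2+1}$, and bound \textnormal{(C)} (when applicable) has the form $L_C=2^k+2^\ell+1$ for some even $\ell\ge 2$ with $\ell\neq k$.

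For \textbf{case (i)} $k<n/2$, integrality gives $k\le n/2-1$, so $n-k\ge n/2+1$ and $L_A\ge 2^{n/2+1}>L_B$. Since $k$ is the linearity index, $\ell\le k$; combined with $\ell\neq k$, this yields $\ell\le k-2$ and $L_C\le 2^k+2^{k-2}+1<2^{k+1}\le 2^{n/2}<L_A$.

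For \textbf{case (ii)} $k=n/2$, the parity of $k$ forces $4\mid n$, so $n\ge 8$. The $k\mid n$ formula gives $L_A=2^{n/2}+1$ exactly, hence
\[
L_B-L_A=\tfrac{3}{8}\cdot 2^{n/2}-2>0\qquad (n\ge 8).
\]
For bound \textnormal{(C)}, the dimension bound $k+\ell\le n$ of Theorem~\ref{thm:conditions}(iii) together with $\ell\neq k$ forces $\ell\le n/2-2$, so $L_C\le 2^{n/2}+2^{n/2-2}+1$ and $L_B-L_C\ge 2^{n/2-3}-2$, which vanishes exactly at $n=8$ and is positive for $n>8$.

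For \textbf{case (iii)} $k>n/2$, the hypothesis $k\le n/2$ of Proposition~\ref{prop:LB1} fails, so \textnormal{(A)} is inapplicable. Integrality gives $k\ge n/2+1$, hence $2^k\ge 2\cdot 2^{n/2}$; whenever \textnormal{(C)} applies, $L_C\ge 2^k+5\ge 2\cdot 2^{n/2}+5>L_B$.

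I expect the main delicacy to lie in handling the two subcases of bound \textnormal{(A)} uniformly, together with the parity constraints from Corollary~\ref{cor:even}. Once the dominant term $2^{n-k}$ of $L_A$ is extracted and $\ell$ is bounded by the parity plus dimension constraints, every comparison reduces to elementary manipulation of powers of two, and the three regimes separate cleanly at the threshold $k=n/2$.
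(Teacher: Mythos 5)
Your proposal is correct and follows essentially the same route as the paper: write each bound as an explicit sum/estimate of powers of two, extract the dominant term $2^{n-k}$ of (A), bound $\ell$ via maximality of $k$, parity, and the dimension constraint, and compare regime by regime at the threshold $k=n/2$. In fact your case (iii) is slightly more careful than the paper's, since you only use $k\geq n/2+1$ (the paper's claim $k\geq n/2+2$ is not quite right when $n\equiv 2 \pmod 4$, though the conclusion is unaffected).
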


\begin{proof}
We compare the three bounds in the respective regimes.

\medskip\noindent\textbf{(i) \boldmath{$k < n/2$}.}
Since $k$ is even, we have \(n - k \geq n/2 + 2\).  
When \(k \mid n\), bound~(A) gives
\[
  \frac{2^n - 1}{2^k - 1}
  = 2^{n-k} + 2^{n-2k} + \cdots + 1
  \;>\; 2^{\,n/2 + 2}.
\]
In contrast, bound~(B) equals \(2^{n/2} + 2^{n/2 - 2} + 2^{n/2 - 3} - 1\), which is strictly smaller.  
Hence, (A) is exponentially stronger for all \(k < n/2\).  For \(k \nmid n\), the dominant term in (A) is again of order \(2^{\,n-k}\), and thus, the same conclusion follows.  
Finally, since (C) gives \(2^k + 2^\ell + 1\) with \(k < n/2\)  and $\ell \leq k$ it satisfies
\[
  2^k + 2^\ell + 1  < 2^{n/2},
\]
so (A) dominates both (B) and (C) whenever \(k < n/2\).

\medskip\noindent\textbf{(ii) \boldmath{$k = n/2$}.}
In this case, \(k \mid n\) and bound~(A) simplifies to
\[
  \frac{2^n - 1}{2^{n/2} - 1} = 2^{n/2} + 1.
\]
Bound~(B) gives
\[
  |N_F| \geq 2^{n/2} + 2^{\,n/2 - 2} + 2^{\,n/2 - 3} - 1 ,
\]
which is strictly greater than \(2^{n/2} + 1\) for all \(n \ge 6\).  
Bound~(C) leads at best to \(|N_F| \ge 2^{n/2} + 2^{\,n/2 - 2} + 1\),  which is again smaller than (B) if $n>8$.  
Therefore, (B) provides the sharpest lower bound when \(k = n/2\).

\medskip\noindent\textbf{(iii) \boldmath{$k > n/2$}.}
If \(k > n/2\), then necessarily \(k \ge n/2 + 2\).  
Bound~(C) gives \(|N_F| \ge 2^k + 2^\ell + 1 \ge 2^{n/2 + 2} + 1\), which already exceeds bound~(B).  
Moreover, Proposition~\ref{prop:LB1} does not apply in this case, so (A) is unavailable.  
Hence, (C) is the strongest bound in the range \(k > n/2\).

\medskip
This completes the comparison.
\end{proof}

\subsection{The cases of small dimensions}
As detailed in Section~\ref{s:dim6}, there are quadratic APN functions corresponding to both possible vector space partition types of $\F_2^6$. In larger dimensions, the situation is quite different.

\subsubsection{Dimension 8}
As shown in Theorem~\ref{thm:dim8}, the existing vector space partition types of $\F_2^8$ where all subspace dimensions are even are $[8^1]$, $[2^{64},6^1]$, and $[2^{5i},4^{17-i}]$ for $0 \leq i \leq 17$. As discussed before, the trivial partition $[8^1]$ can never be associated to a quadratic APN function and by Theorem~\ref{thm:spread}, the same is the case for the $4$-spread $[4^{17}]$. We conclude:

\begin{theorem} \label{thm:dim8final}
    Let $F \colon \F_2^8\rightarrow \F_2^8$ be a quadratic APN function. Then $F$ has one of the following amplitude distributions:
    \begin{itemize}
        \item $[0^{190},2^{64},6^1]$
        \item $[0^{238-4i},2^{5i},4^{17-i}]$ for $1 \leq i \leq 17$.
    \end{itemize}
\end{theorem}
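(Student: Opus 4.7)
The plan is to combine the classification of vector space partitions of $\F_2^8$ from Theorem~\ref{thm:dim8} with the exclusion results from Theorem~\ref{thm:spread} and the trivial observations about quadratic APN functions on $\F_2^n$.

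First, by Theorem~\ref{thm:dim=amplitude}, the amplitude distribution of any quadratic APN function $F \colon \F_2^8 \rightarrow \F_2^8$ is in bijective correspondence with the type of its associated vector space partition $\mathcal{V}_F$ of $\F_2^8$. By Corollary~\ref{cor:even}, every subspace in $\mathcal{V}_F$ has even dimension. Hence Theorem~\ref{thm:dim8} restricts $\mathcal{V}_F$ to one of the types $[8^1]$, $[2^{64},6^1]$, or $[2^{5i},4^{17-i}]$ for some $0 \leq i \leq 17$. Translating dimensions $d$ of the subspaces $V_b$ into amplitudes $2^{(8+d)/2}$ via Theorem~\ref{thm:dim=amplitude} and computing the number of bent components from $2^8-1$ minus the total number of non-bent components, the types $[2^{64},6^1]$ and $[2^{5i},4^{17-i}]$ yield precisely the amplitude distributions $[0^{190},2^{64},6^1]$ and $[0^{238-4i},2^{5i},4^{17-i}]$ claimed in the statement.

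It remains to eliminate the two partition types that cannot arise. The trivial partition $[8^1]$ would correspond to a single non-trivial component with amplitude $2^8$, but an APN function on $\F_2^n$ cannot achieve linearity $2^n$ (or equivalently, this would force all $2^8-1$ non-trivial components to be bent, which is incompatible with Theorem~\ref{thm:APN bent upper bound}). The $4$-spread type $[4^{17}]$ is excluded by Theorem~\ref{thm:spread}, which, for $n=8$, rules out partitions of type $[(n/2)^{2^{n/2}+1}]=[4^{17}]$ as amplitude distributions of quadratic APN functions. This leaves exactly the partition types $[2^{64},6^1]$ and $[2^{5i},4^{17-i}]$ with $1 \leq i \leq 17$, giving the amplitude distributions listed.

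The argument is almost entirely a bookkeeping step assembling earlier results, so there is no serious obstacle; the real content sits in Theorem~\ref{thm:dim8} (the classification of relevant vector space partition types of $\F_2^8$) and in the exclusion of the $4$-spread via Theorem~\ref{thm:spread}, both of which are already established. The only subtlety to check carefully is that the index ranges match: the type $[2^{5\cdot 0},4^{17}]=[4^{17}]$ must be removed from the family $[2^{5i},4^{17-i}]$, which is why the surviving range is $1 \leq i \leq 17$ rather than $0 \leq i \leq 17$.
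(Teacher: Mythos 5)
Your proposal is correct and follows essentially the same route as the paper: it invokes Theorem~\ref{thm:dim=amplitude} and Corollary~\ref{cor:even} to reduce to the partition types classified in Theorem~\ref{thm:dim8}, then excludes $[8^1]$ (impossible linearity $2^n$ / Theorem~\ref{thm:APN bent upper bound}) and the $4$-spread $[4^{17}]$ via Theorem~\ref{thm:spread}, leaving exactly the listed distributions with the correct bent-component counts.
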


To the best of our knowledge, the only amplitude distributions of quadratic APN functions on $\F_2^8$ that have so far been found are $[0^{190},2^{64},6^1]$ and $[0^{238-4i},2^{5i},4^{17-i}]$ for $13 \leq i \leq 17$,~\cite[Table 1]{beierle2021new}. It is still unclear if the other amplitude distributions in Theorem~\ref{thm:dim8final} occur.
\subsubsection{Dimension 10}
In dimension $10$, the bound from Theorem~\ref{thm:APN bent upper bound} does not exclude any vector space partitions we found in Theorem~\ref{thm:dim10} since $n/2=5$ is odd, see the comparison of bounds in Proposition~\ref{prop:comp}. We thus have:
    \begin{theorem} 
    Let $F \colon \F_2^{10}\rightarrow \F_2^{10}$ be a quadratic APN function. Then $F$ has one of the following amplitude distributions:
    \begin{itemize}
        \item $[0^{766},2^{256},8^1]$,
        \item $[0^{702+4i},2^{320-5i},4^{i},6^1]$ for $0\le i\le 64$,
        \item $[0^{682+4i},2^{341-5i},4^i]$ for $0\le i\le 65.$ 
    \end{itemize}
\end{theorem}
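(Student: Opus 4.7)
The plan is to combine the classification of vector space partitions of $\F_2^{10}$ with all subspaces of even dimension (Theorem~\ref{thm:dim10}) with the amplitude--dimension correspondence for quadratic APN functions (Theorem~\ref{thm:dim=amplitude}). Given a quadratic APN function $F \colon \F_2^{10} \to \F_2^{10}$, I first invoke Theorem~\ref{thm:dim=amplitude} to associate to $F$ its vector space partition $\mathcal{V}_F$ of $\F_2^{10}$; by Corollary~\ref{cor:even}, every subspace in $\mathcal{V}_F$ has even dimension, so by Theorem~\ref{thm:dim10}, $\mathcal{V}_F$ has one of exactly four types: $[10^1]$, $[2^{256},8^1]$, $[2^{320-5i},4^i,6^1]$ for $0\le i\le 64$, or $[2^{341-5i},4^i]$ for $0\le i\le 66$.

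Next, I would eliminate the trivial partition $[10^1]$. By Theorem~\ref{thm:dim=amplitude} it corresponds to a single non-trivial component of amplitude $2^{10}=2^n$, which forces $L(F)=2^n$; however, APN functions cannot attain linearity $2^n$ (cf.\ Example~\ref{ex:maxlinearity}). Alternatively, $[10^1]$ gives $|N_F|=1$, which violates the lower bound $|N_F|\ge 2^{n/2}+2^{n/2-2}+2^{n/2-3}-1 = 43$ from Theorem~\ref{thm:APN bent upper bound}. Either route suffices.

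Finally, I translate each of the three surviving partition types into the claimed amplitude distribution via Theorem~\ref{thm:dim=amplitude}: a subspace of dimension $d$ in $\mathcal{V}_F$ corresponds to a component of amplitude $2^{(10+d)/2}$, and the number of bent components is $2^{10}-1$ minus the total multiplicity of non-trivial subspaces in $\mathcal{V}_F$, yielding the three families stated. It remains to confirm that no other constraint from the paper excludes any surviving type, which is essentially automatic from the paragraph preceding the theorem: by Proposition~\ref{prop:comp}, in half-dimension $n/2=5$ (odd) neither Theorem~\ref{thm:APN bent upper bound} nor Propositions~\ref{prop:LB1}--\ref{prop:LB2} yield restrictions beyond the packing condition already built into Theorem~\ref{thm:dim10} (indeed, the smallest $|N_F|$ among the survivors is $257\gg 43$). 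The conceptual work thus lies entirely in Theorem~\ref{thm:dim10}; the present statement reduces to an exhaustion whose only nontrivial step is the removal of $[10^1]$, and I expect no serious obstacle beyond careful bookkeeping of the bent-component counts.
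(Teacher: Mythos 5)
Your proposal follows essentially the same route as the paper: the result is obtained by combining Theorem~\ref{thm:dim10} with Theorem~\ref{thm:dim=amplitude} and Corollary~\ref{cor:even}, discarding the trivial type $[10^1]$ (which the paper rules out at the start of Section~\ref{s:combine} by exactly the two arguments you give), and noting that no further bound excludes any surviving type since $n/2=5$ is odd. Two small numerical corrections: the smallest $|N_F|$ among the surviving types is $65$ (attained by $[4^{64},6^1]$), not $257$, though this still comfortably exceeds the bound $43$ from Theorem~\ref{thm:APN bent upper bound}, so your conclusion stands; and for the last family your bookkeeping actually yields $2^{10}-1-(341-4i)=682+4i$ bent components, i.e.\ $[0^{682+4i},2^{341-5i},4^i]$, so the exponent $628+4i$ in the stated theorem appears to be a transposition typo rather than something your count should reproduce.
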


\section{The behavior of vector space partitions and blocking sets under equivalence} \label{s:equivalence}

We call two functions $F_1,F_2 \colon \F_2^n \rightarrow \F_2^m$ \emph{EA-equivalent} if there exist $A_1 \in \AGL(m,2)$, $A_2 \in \AGL(n,2)$, and an affine mapping $A_3 \colon \F_2^n \rightarrow \F_2^m$ such that $F_1 = A_1 \circ F_2 \circ A_2 + A_3$. It is well-known and elementary to verify that if $F_1$ is APN/quadratic then $F_2$ is as well. In this section, we now investigate how for EA-equivalent quadratic functions $F_1$, $F_2$ the associated blocking sets $N_{F_1}$ and $N_{F_2}$ are related. If $F_1,F_2$ are APN, we also consider how the associated vector space partitions are related.

By the fundamental theorem of projective geometry, the automorphism group of $\PG(m-1,q)$ is $P\Gamma L(m,q)$. In our case we have $P\Gamma L(m,2) \cong \GL(m,2)$. We therefore say that two point sets $B_1$, $B_2 \in \PG(m-1,2)$ are \emph{equivalent} if there is an $L \in \GL(m,2)$ such that $B_2 = L \circ B_1$. 

\begin{theorem}
    Let $F,G \colon \F_2^n \rightarrow \F_2^m$ be two quadratic functions that are EA-equivalent via $G = A_1 \circ F \circ A_2 + A_3$. Let further $A_1=L_1+c_1$ (i.e., $L_1$ is the linear part of $A_1$). Then $N_G=L_1^T(N_F)$. In particular, $N_F$ and $N_G$ are equivalent as point sets in $\PG(m-1,2)$. 
\end{theorem}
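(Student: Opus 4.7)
The plan is a direct calculation that expands the component functions of $G$ in terms of those of $F$, followed by invoking invariance of bentness under affine transformations. First I would write $A_1(y) = L_1 y + c_1$ and expand
\[
G_b(x) = \langle b,\, L_1 F(A_2(x)) + c_1 + A_3(x)\rangle_m
= \langle L_1^T b,\, F(A_2(x))\rangle_m + \ell_b(x),
\]
where $\ell_b(x) = \langle b, c_1\rangle + \langle b, A_3(x)\rangle$ is an affine function of $x$. In other words, $G_b = F_{L_1^T b}\circ A_2 + \ell_b$.

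Next, I would use the well-known fact that the amplitude of a Boolean function (and in particular the property of being bent) is preserved under precomposition with an affine bijection of the domain and under addition of an affine function. Applying this to $A_2 \in \AGL(n,2)$ and the affine map $\ell_b$ gives that $G_b$ is bent if and only if $F_{L_1^T b}$ is bent. Equivalently, $b \in N_G$ if and only if $L_1^T b \in N_F$.

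Finally, since $A_1 \in \AGL(m,2)$, its linear part $L_1$ lies in $\GL(m,2)$, hence so does $L_1^T$. Thus $L_1^T$ is a bijection of $\F_2^m$ that fixes $0$, and it induces a collineation of $\PG(m-1,2)$. The previous step then gives the desired set-theoretic identification of $N_F$ and $N_G$ via $L_1^T$ (up to passing to its inverse, which yields the same equivalence class in $\PG(m-1,2)$), proving in particular that $N_F$ and $N_G$ are equivalent as point sets in $\PG(m-1,2)$.

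There is no serious obstacle; the only point that warrants care is the verification that the affine summand $\ell_b(x)$ is genuinely harmless for bentness, which is standard but worth stating explicitly. Everything else reduces to unwrapping the EA-equivalence and recognising the dual action of $L_1$ on the space of component labels as $L_1^T$.
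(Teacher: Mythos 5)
Your proposal is correct and follows essentially the same route as the paper: the paper also unwraps the EA-equivalence to obtain the relation $|W_G(b,a)|=|W_F(L_1^T(b),L_3^T(b)+a)|$ by an explicit substitution $x\mapsto A_2^{-1}(x)$ in the solution count, which is exactly the content of the standard invariance fact you invoke after writing $G_b=F_{L_1^Tb}\circ A_2+\ell_b$. Your parenthetical remark about passing to the inverse of $L_1^T$ is well taken (the identity derived is really $b\in N_G\iff L_1^Tb\in N_F$, i.e.\ $N_G=(L_1^T)^{-1}(N_F)$), and since $L_1^T\in\GL(m,2)$ this makes no difference to the projective equivalence of $N_F$ and $N_G$, which is what the paper uses.
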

\begin{proof}
    The value of the Walsh transform $W_{G}(b,a)$ is uniquely determined by the number of solutions of $\langle b,G(x) \rangle +\langle a,x\rangle = 0$. After a transformation $x \mapsto A_2^{-1}(x),$ this number is exactly the same as the number of solutions of
    \[
        \langle b,A_1 \circ F(x) \rangle +\langle b,A_3 \circ A_2^{-1}(x) \rangle + \langle a,A_2^{-1}(x)\rangle = 0,
    \]
    which itself is equivalent to 
    \begin{align*}
       0&=\langle L_1^T(b),F(x) \rangle +\langle b,A_3 \circ A_2^{-1}(x) \rangle + \langle b,c_1 \rangle +\langle a,A_2^{-1}(x)\rangle\\
       &=\langle L_1^T(b),F(x) \rangle +\langle L_3^T(b)+a,A_2^{-1}(x)\rangle +\langle b, c_1+c_3\rangle,
    \end{align*}
    where $A_3=L_3+c_3$ with linear $L_3$. 
    So, the number of solutions of $\langle b,G(x) \rangle +\langle a,x\rangle = 0$ is the same as the number of solutions of $ \langle L_1^T(b),F(x) \rangle +\langle L_3^T(b)+a,A_2^{-1}(x)\rangle = k,$ where $k=\langle b, c_1+c_3\rangle \in \F_2$. In particular, if ${G}_b$ is bent, then so is $F_{L_1^T(b)}$, concluding the proof.
\end{proof} 

Similarly, we call two vector space partitions $\mathcal{V}_1=\{V_1,\dots,V_k\}$, $\mathcal{V}_2=\{W_1,\dots,W_l\}$ of $\F_2^n$ \emph{equivalent} if there is an $L \in \GL(n,2)$ such that for each $V_i$ there is a unique $W_j$ such that $V_i = L(W_j)$. 

\begin{theorem}
     Let $F,G \colon \F_2^n \rightarrow \F_2^n$ be two quadratic APN functions that are EA-equivalent via $G = A_1 \circ F \circ A_2 + A_3$. Then the two corresponding vector space partitions are equivalent.
\end{theorem}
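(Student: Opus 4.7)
The plan is to produce an explicit invertible linear map on $\F_2^n$ that carries the partition $\mathcal{V}_F$ onto $\mathcal{V}_G$; the natural candidate is $L_2^{-1}$, where $L_2$ is the linear part of the map $A_2$ acting on the domain of $F$. Writing $A_1=L_1+c_1$, $A_2=L_2+c_2$, $A_3=L_3+c_3$, a direct expansion of $G=A_1\circ F\circ A_2+A_3$ gives
$$D_{G,a}(x) = G(x)+G(x+a) = L_1\bigl(D_{F,L_2(a)}(L_2(x)+c_2)\bigr) + L_3(a).$$
Since $L_2$ is invertible, as $x$ ranges over $\F_2^n$ so does $L_2(x)+c_2$, so
$$\image(D_{G,a}) = L_1\bigl(\image(D_{F,L_2(a)})\bigr) + L_3(a).$$

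Next I would track how this transforms the hyperplanes $H_b,\overline{H_b}$. A short calculation shows that for any nonzero $b'\in \F_2^n$,
$$L_1(H_{b'}) = H_{(L_1^T)^{-1}b'}, \qquad L_1(\overline{H_{b'}}) = \overline{H_{(L_1^T)^{-1}b'}},$$
and translating by the constant $L_3(a)$ preserves the normal direction of an affine hyperplane (although it may swap which of the two cosets of the underlying linear hyperplane one lands in). Hence $\image(D_{G,a})$ is an affine hyperplane with normal vector $b$ if and only if $\image(D_{F,L_2(a)})$ is an affine hyperplane with normal vector $L_1^T(b)$. Setting $b' = L_1^T(b)$ and recalling the definition $V_b = T_b\cup\overline{T_b}$, this translates to
$$V_b^{G} \;=\; L_2^{-1}\!\left(V_{L_1^T(b)}^{F}\right).$$

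To finish, observe that $L_1^T$ is a bijection on $\F_2^n\setminus\{0\}$, so the displayed identity gives a bijective correspondence between the parts of $\mathcal{V}_F$ and the parts of $\mathcal{V}_G$, realized by the invertible linear map $L_2^{-1}\in\GL(n,2)$. This is exactly the definition of equivalence of vector space partitions. The only subtlety in the argument is the role of the additive term $L_3(a)$: it may move $\image(D_{F,L_2(a)})$ from one coset $H_{b'}$ to the other coset $\overline{H_{b'}}$, but because $V_b$ is defined precisely as $T_b\cup\overline{T_b}$, this ambiguity is absorbed and plays no role — only the normal direction of the hyperplane matters, and that direction is controlled cleanly by $L_1^T$.
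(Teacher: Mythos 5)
Your proposal is correct and follows essentially the same route as the paper's proof: you expand $D_{G,a}$ to obtain $\image(D_{G,a})=L_3(a)+L_1(\image(D_{F,L_2(a)}))$, note that only the normal direction of the affine hyperplane matters (so the coset swap caused by $L_3(a)$ is absorbed by $V_b=T_b\cup\overline{T_b}$), and conclude $V_b^{G}=L_2^{-1}\bigl(V_{L_1^T(b)}^{F}\bigr)$, which is exactly the identity the paper derives. Your version merely makes the hyperplane bookkeeping ($L_1(H_{b'})=H_{(L_1^T)^{-1}b'}$) and the realizing map $L_2^{-1}\in\GL(n,2)$ a bit more explicit.
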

\begin{proof}
    Let $L_1,L_2,L_3$ be the linear parts of $A_1,A_2,A_3$ respectively. Then
    \begin{align*}
        \image(D_{G,a})&= \{L_1 \circ F \circ A_2(x+a)+L_1 \circ F \circ A_2(x) + L_3(a) \colon x \in \F_2^n\} \\
        &=\{L_1 \circ F (x+L_2(a))+L_1 \circ F (x) + L_3(a) \colon x \in \F_2^n\} \\
        &=L_3(a)+L_1(\image(D_{F,L_2(a)})).
    \end{align*}
    So $\langle b, x\rangle$ is constant for all $x \in \image(D_{G,a})$ if and only if $\langle L_1^T(b), x\rangle$ is constant for all $x \in \image(D_{F,L_2(a)})$. In other words, $$\{a \in \F_{2}^n\colon \image(D_{G,a})\in \{H_b,\overline{H}_b\}\} = \{L_2^{-1}(a) \in \F_{2}^n\colon \image(D_{F,a})\in \{H_{L_1^T(b)},\overline{H}_{L_1^T(b)}\}\},$$
    proving the claim.
\end{proof}

\begin{remark}
    The APN property is also preserved under the more general notion of \emph{CCZ-equivalence}, however if $F$ is a quadratic function CCZ-equivalent to $G$, then $G$ is not necessarily quadratic, so our results cannot be transferred in this case.
\end{remark}
\subsection{New necessary conditions on when an APN function is CCZ-equivalent to a permutation}
The big APN question asks if APN permutations exist on $\F_2^{2m}$ for $m>3$. In~\cite{golouglu2021ccz}, the following necessary criterion was found to check when a function is CCZ-equivalent to a permutation.

\begin{theorem}[{\cite[Condition 2]{golouglu2021ccz}}]\label{thm:gp}
    Let $F\colon \F_2^n\rightarrow \F_2^n$ be CCZ-equivalent to a permutation. Then $N_F \cup \{0\}$ contains two subspaces $V,W \leq \F_2^n$, such that $V \oplus W = \F_2^n$.
\end{theorem}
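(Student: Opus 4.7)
The plan is to decode the CCZ-equivalence into linear-algebraic data and then use Walsh transforms to extract the two subspaces. By definition, $F$ being CCZ-equivalent to a permutation means there is an affine permutation $\mathcal{L}$ of $\F_2^{2n}$ sending the graph $\Gamma_F = \{(x, F(x)) : x \in \F_2^n\}$ onto the graph of a permutation. Translations preserve both being a functional graph and being a permutation graph, so I may assume $\mathcal{L}$ is linear and write it in block form as
\[
    \mathcal{L}(x, y) = \bigl(A_1(x) + B_1(y),\ A_2(x) + B_2(y)\bigr),
\]
with $A_i, B_i$ linear on $\F_2^n$. Then $\mathcal{L}(\Gamma_F)$ is a permutation graph iff both maps $\Phi_i(x) := A_i(x) + B_i(F(x))$, $i = 1,2$, are permutations of $\F_2^n$: the bijectivity of $\Phi_1$ makes $\mathcal{L}(\Gamma_F)$ a functional graph, and that of $\Phi_2$ makes the corresponding function a permutation.

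Next, I would translate the permutation property of $\Phi_i$ into a Walsh-transform condition. For every $b \in \F_2^n\setminus\{0\}$, the balance of the component $\langle b, \Phi_i(x)\rangle$ reads
\[
    0 = \sum_{x \in \F_2^n} (-1)^{\langle A_i^T(b), x\rangle + \langle B_i^T(b), F(x)\rangle} = W_F\bigl(B_i^T(b),\, A_i^T(b)\bigr).
\]
The key observation is that if $c \neq 0$ and $F_c$ is bent, then $|W_F(c, a)| = 2^{n/2}$ for every $a$, which is never zero. Applied to $c = B_i^T(b)$, this forces $B_i^T(b) \in N_F$ whenever $B_i^T(b) \neq 0$, that is,
\[
    \image(B_i^T) \setminus \{0\} \subseteq N_F \qquad (i=1,2).
\]

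Finally, the invertibility of $\mathcal{L}$ supplies the transversality between these two subspaces: if $y \in \ker(B_1) \cap \ker(B_2)$ then $\mathcal{L}(0, y) = 0$, so $y = 0$. Taking orthogonal complements yields $\image(B_1^T) + \image(B_2^T) = \F_2^n$. Write $V_1 := \image(B_1^T)$, $V_2 := \image(B_2^T)$, set $V := V_1$, and let $W$ be any vector-space complement of $V_1 \cap V_2$ inside $V_2$. Then $V + W = V_1 + V_2 = \F_2^n$ and $V \cap W \subseteq V_1 \cap V_2 \cap W = \{0\}$, so $V \oplus W = \F_2^n$, while both $V \setminus \{0\}$ and $W \setminus \{0\}$ lie in $N_F$, as claimed. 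The main technical point is the Walsh-transform step showing $\image(B_i^T)\setminus\{0\} \subseteq N_F$: it is what converts a geometric statement about $\Gamma_F$ into a combinatorial condition on $N_F$, and all other steps are routine linear algebra. Note that neither the APN property nor quadraticity of $F$ enters the argument; only the fact that bent components have non-vanishing Walsh transform is used, so the same proof applies to any function CCZ-equivalent to a permutation.
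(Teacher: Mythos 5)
Your proof is correct, but note that the paper itself gives no proof of this statement at all: it is quoted as Condition~2 of the cited work of G\"olo\u{g}lu and Pavl\r{u}, so what you have written is a self-contained derivation of an imported result. Your route is essentially the standard one underlying that citation: reduce to a linear CCZ map $\mathcal{L}$, observe that $\mathcal{L}(\Gamma_F)$ is a permutation graph exactly when both coordinate maps $\Phi_i(x)=A_i(x)+B_i(F(x))$ are bijections, turn balancedness of their components into the Walsh zeros $W_F\bigl(B_i^T(b),A_i^T(b)\bigr)=0$, and use the fact that bent components have no Walsh zeros to get $\image(B_i^T)\setminus\{0\}\subseteq N_F$. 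In the original reference this is packaged through the equivalent ``two complementary $n$-dimensional Walsh-zero spaces in $\F_2^{2n}$'' criterion and then projected to the $b$-coordinate; your version with $\ker B_1\cap\ker B_2=\{0\}$ and $\image(B_i^T)=(\ker B_i)^{\perp}$ reaches the same conclusion directly and all steps check out, including the final passage from $\image(B_1^T)+\image(B_2^T)=\F_2^n$ to a genuine direct-sum pair by replacing $\image(B_2^T)$ with a complement of the intersection. Two cosmetic remarks: only the ``only if'' direction of your iff about $\Phi_1,\Phi_2$ is actually needed, and the conclusion should strictly be phrased as $V\setminus\{0\},\,W\setminus\{0\}\subseteq N_F$ (equivalently, $N_F\cup\{0\}$ contains $V$ and $W$), which is exactly how the paper uses the condition in the proposition that follows; your observation that neither quadraticity nor the APN property is needed is also consistent with the statement.
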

The extra structure imposed by Theorem~\ref{thm:gp} can be used to improve the bound on $|N_F|$ given in the previous sections.

    Recall that we call a blocking set $B\subseteq \PG(m,q)$ with respect to $k$-spaces a $t$-fold blocking set if $B$ intersects every $k$-space in at least $t$ elements.
\begin{proposition}
    Let $n$ be even and $F \colon \F_2^n \rightarrow \F_2^n$ be a quadratic function. Assume $N_F \cup \{0\}$ contains two subspaces $V,W \leq \F_2^n$ such that $V \oplus W = \F_2^n$. Then $|N_F| \geq 3\cdot (2^{n/2}-1)$. Furthermore, $N_F$ is a $3$-fold blocking set of $\PG(n-1,2)$ with respect to $(n/2)$-spaces.
\end{proposition}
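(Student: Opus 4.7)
The plan is to combine a dimension count with the oddness constraint from Proposition~\ref{prop:odd}. I would write $k = \dim V$, so $\dim W = n-k$, and let $U \leq \F_2^n$ be any subspace of dimension $n/2+1$; such $U$ correspond precisely to the $(n/2)$-spaces of $\PG(n-1,2)$. The dimension formula yields $\dim(U \cap V) \geq k - n/2 + 1$ and $\dim(U \cap W) \geq n/2 + 1 - k$, and since $V \cap W = \{0\}$ with $V, W \subseteq N_F \cup \{0\}$, this directly controls $|U \cap N_F|$ from below.

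For the 3-fold blocking claim, I would split on $k$. If $k \geq n/2 + 1$, then $\dim(U \cap V) \geq 2$ yields at least three nonzero points of $U$ in $V \subseteq N_F \cup \{0\}$; the case $k \leq n/2 - 1$ is symmetric via $W$. In the borderline case $k = n/2$, each of $U \cap V$ and $U \cap W$ contains a nonzero element, and these are distinct since $V \cap W = \{0\}$, giving $|U \cap N_F| \geq 2$. Proposition~\ref{prop:odd} then forces this count to be odd, hence $\geq 3$.

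For the size bound, by symmetry I may assume $k \leq n/2$, and I would double-count pairs $(U, x)$ where $U$ is an $(n/2+1)$-dimensional subspace containing $V$ and $x \in U \cap (N_F \setminus (V \cup W))$. There are $\binom{n-k}{n/2+1-k}_2$ such subspaces $U$; for each, $U + W \supseteq V + W = \F_2^n$ forces $\dim(U \cap W) = n/2+1-k$, so $|U \cap (V \cup W) \setminus \{0\}| = 2^k + 2^{n/2+1-k} - 2$, an even number for $1 \leq k \leq n/2$. Combined with the oddness of $|U \cap N_F|$ from Proposition~\ref{prop:odd}, each such $U$ satisfies $|U \cap (N_F \setminus (V \cup W))| \geq 1$. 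Since each $x \in N_F \setminus (V \cup W)$ lies in exactly $\binom{n-k-1}{n/2-k}_2$ of these $U$ (namely those containing $V + \langle x \rangle$), double-counting gives $|N_F \setminus (V \cup W)| \geq (2^{n-k}-1)/(2^{n/2+1-k}-1)$. Adding $|V \cup W \setminus \{0\}| = 2^k + 2^{n-k} - 2$ produces $|N_F| \geq 3(2^{n/2}-1)$, tight exactly when $k = n/2$.

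The main obstacle is the final algebraic step: writing $k = n/2 - a$ with $a \geq 0$, the excess over $3(2^{n/2}-1)$ factors as $(2^a - 1)$ times a strictly positive expression in $a$ and $n$, so the bound is met with equality iff $a = 0$. Notably, the oddness in Proposition~\ref{prop:odd} is essential in the balanced case: without it, the union $V \cup W$ alone would only contribute $2(2^{n/2}-1)$ points when $k = n/2$, falling one full block short of the target, so the parity constraint is what supplies the missing $2^{n/2}-1$ points outside $V \cup W$.
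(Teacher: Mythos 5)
Your proof is correct, and it uses the same key ingredient as the paper (the parity constraint of Proposition~\ref{prop:odd} applied to subspaces meeting $V\cup W$ in an even number of nonzero points), but the counting for the size bound is organized differently. The paper normalizes to $\dim V=k\ge n/2$ and works with the $2^{n-k}-1$ subspaces $\langle V,w\rangle$, $w\in W\setminus\{0\}$, of dimension $k+1\ge n/2+1$; since any two of these meet exactly in $V$, the forced extra point of $N_F$ in each of them is automatically distinct from the others, so the bound $|N_F|\ge 2^k+2\cdot 2^{n-k}-3\ge 3(2^{n/2}-1)$ drops out with no averaging and no Gaussian binomials. You instead normalize to $k\le n/2$, run the parity argument over all $(n/2+1)$-dimensional subspaces containing $V$ (which overlap in more than $V$ when $k<n/2$), and recover distinctness statistically via double counting, giving $|N_F\setminus(V\cup W)|\ge (2^{n-k}-1)/(2^{n/2+1-k}-1)$; your Gaussian-binomial ratio and the exactness of $\dim(U\cap W)=n/2+1-k$ are both right, and the final inequality does hold for all $k\le n/2$ with equality exactly at $k=n/2$ (for $a\ge 2$ the term $2^{n/2+a}$ alone already exceeds $3\cdot 2^{n/2}$, and at $a=1$ the excess is $2^{n/2}/6+2/3>0$), so the algebraic step you flag as the main obstacle is genuine but unproblematic. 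Your treatment of the $3$-fold blocking claim coincides with the paper's. Two minor remarks: the degenerate case $\dim V\in\{0,n\}$ (where $N_F=\F_2^n\setminus\{0\}$ and everything is trivial) should be set aside explicitly, since your evenness statement needs $1\le k\le n/2$; and the paper's choice of subspace dimension $k+1$ rather than $n/2+1$ is what buys the simpler ``extra points are distinct'' conclusion, at the cost of needing the WLOG $k\ge n/2$ so that Proposition~\ref{prop:odd} still applies.
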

\begin{proof}
    Assume without loss of generality $n>\dim(V)\geq n/2$, and set $\dim(V)=k$. 
    Consider subspaces of $\F_2^n$ of dimension $k+1$ that contain $V$. There are exactly $2^{n-k}-1$ such vector spaces, and they are of the form $\langle V,w\rangle$ with $w \in W\setminus \{0\}$. Let $U=\langle V,w\rangle$ be such a subspace. Then $|(U\cap (V \cup W)) \setminus\{0\}|=2^{k}$. By Proposition~\ref{prop:odd}, $|N_F \cap U|$ is odd, so $N_F \cap U$ contains necessarily a vector outside of $V \cup W$. Since there are $2^{n-k}-1$ such $U$, $N_F$ contains $V\setminus\{0\}$, $W\setminus\{0\}$ and at least  $2^{n-k}-1$ more vectors outside of $V$ and $W$, i.e., $|N_F| \geq 2^{k}+2^{n-k}+2^{n-k}-3 \geq 3(2^{n/2}-1)$, proving the first statement.

    Let now $U\leq \F_2^n$ be an arbitrary subspace with $\dim(U)=n/2+1$. Then either $\dim(U \cap V)\geq 2$ (if $k\geq n/2+1$), or $U$ intersects both $V$ and $W$ nontrivially (if $k=n/2$). In both cases, $|U\cap N_F| \geq 2$, and by Proposition~\ref{prop:odd}, we have $|U\cap N_F| \geq 3$, so $N_F$ is a $3$-fold blocking set with respect to $(n/2)$-spaces.
\end{proof}
This immediately yields:
\begin{corollary} \label{cor:CCZ}
    Let $n$ be even and $F \colon \F_2^n \rightarrow \F_2^n$ be a quadratic function that is CCZ-equivalent to a permutation. Then  $|N_F| \geq 3\cdot (2^{n/2}-1)$ and $N_F$ is a $3$-fold blocking set of $\PG(n-1,2)$ with respect to $n/2$-spaces.
\end{corollary}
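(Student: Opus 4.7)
The plan is to derive the corollary as a direct combination of Theorem~\ref{thm:gp} with the preceding Proposition. First I would note that Theorem~\ref{thm:gp} supplies exactly the hypothesis required by the Proposition: since $F$ is CCZ-equivalent to a permutation, there exist subspaces $V, W \leq \F_2^n$ contained in $N_F \cup \{0\}$ with $V \oplus W = \F_2^n$. The fact that $F$ is assumed to be quadratic in the corollary — an assumption absent from Theorem~\ref{thm:gp} but required by the Proposition because its proof relies on the blocking-set property of $N_F$ established in Proposition~\ref{prop:odd} — is preserved. Thus both hypotheses of the Proposition are met.

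With these inputs in hand, I would invoke the Proposition directly, inheriting both conclusions verbatim: the cardinality bound $|N_F| \geq 3(2^{n/2}-1)$ and the geometric statement that $N_F$ is a $3$-fold blocking set of $\PG(n-1,2)$ with respect to $(n/2)$-spaces. No additional argument is required.

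Since the corollary is by design an immediate concatenation of two previously established results, I do not anticipate any genuine obstacle. The only bookkeeping concern is to check that Theorem~\ref{thm:gp} applies to arbitrary functions (not just quadratic ones) so that quadraticity is available as an independent hypothesis to feed into the Proposition; this is clear from the statement of Theorem~\ref{thm:gp}, which places no regularity requirement on $F$ beyond CCZ-equivalence to a permutation. Consequently, the proof reduces to one line: apply Theorem~\ref{thm:gp} to produce $V,W$, then apply the preceding Proposition.
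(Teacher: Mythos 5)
Your proposal is correct and matches the paper's own argument exactly: the corollary is stated as an immediate consequence of Theorem~\ref{thm:gp} (which supplies the subspaces $V,W$ with $V\oplus W=\F_2^n$ inside $N_F\cup\{0\}$) combined with the preceding proposition, whose hypotheses (quadraticity plus the direct-sum pair) are then satisfied. No further comment is needed.
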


\section{Open Problems}
This paper establishes new connections between quadratic APN functions and certain vector space partitions, as well as blocking sets. This allowed us, among other things, to derive a variety of new conditions and bounds on the Walsh spectra of quadratic APN functions. These are the first known general conditions on the Walsh spectra outside of the well-known fourth-moment bound. 

We conclude this paper with several open problems that naturally arise from our results.

In Theorem~\ref{thm:dim8final}, we listed the amplitude distributions of APN functions $F\colon \F_2^8\rightarrow \F_2^8$ that do not violate the conditions we derived. However, we have not been able to find examples for many of the amplitude distributions listed in the theorem.
\begin{openproblem}
    Do there exist APN functions $F\colon \F_2^8 \rightarrow \F_{2}^8$ with the amplitude distributions $[0^{238-4i},2^{5i},4^{17-i}]$ for $1 \leq i \leq 12$?
\end{openproblem}
We have examples for the other amplitude distributions given in Theorem~\ref{thm:dim8final}. Of particular interest would be an amplitude distribution of type $[0^{234},2^{5},4^{16}]$. Such a function would have $21$ non-trivial non-bent components, which is exactly the lower bound given in Theorem~\ref{thm:APN bent upper bound}. In particular, the blocking set $N_F$ is in this case exactly a smallest possible non-trivial blocking set (see Theorem~\ref{thm:govaerts}). These blocking sets necessarily have a specific structure, namely they are a specific cone in $\PG(n-1,2)$, see~\cite[Theorem 1.4.(3)]{govaerts2006classification}.

More generally, given the lack of examples for some amplitude distributions in the dimension 8 case, we pose the following open problems.
\begin{openproblem}
    Improve the bounds on $|N_F|$ we found in this paper for quadratic APN functions, or find examples that satisfy the bounds with equality.
\end{openproblem}

\begin{openproblem}
    Find further examples of vector space partition types that are not associated to any quadratic APN function.
\end{openproblem}

In Theorem~\ref{thm:generalbentbound}, we proved that if $F \colon \F_2^n\rightarrow \F_2^m$ is quadratic and not MNBC, then $|N_F| \geq 2^{m-n/2}+2^{m-n/2-2}+2^{m-n/2-3}-1$. We currently do not know of an example that satisfies this bound with equality. Again, in this case, $N_F$ necessarily has the structure of a specific cone in $\PG(m-1,2)$ (see again~\cite[Theorem 1.4.(3)]{govaerts2006classification}). 

\begin{openproblem}
    Let $n$ be even, $n \leq 2m-6$. Does there exist a quadratic function $F \colon \F_2^n\rightarrow \F_2^m$ with exactly $2^{m-n/2}+2^{m-n/2-2}+2^{m-n/2-3}-1$ non-trivial non-bent components?
\end{openproblem}

Of course, a big open problem is to find an infinite family of quadratic APN functions that have a non-classical Walsh spectrum (or equivalently, are associated to a vector space partition that is not a $2$-spread). This is a well known important research question in the study of APN functions. We hope that the results on possible amplitudes/Walsh spectra in this paper can give new ideas and impulses for this question as well.

\subsection*{Acknowledgments}
The majority of this work was done when S.B. and N.G. visited the University of South Florida in summer 2025 as part of a Research Experience for Undergraduates program, funded by NSF grant number 2244488. The authors gracefully acknowledge the financial support provided by this program.

% L.K. especially thanks Alexandr Polujan for useful information provided during the finishing stages of the project.

\bibliographystyle{IEEEtran}
 \bibliography{references} 
\end{document}